\newtheorem{theorem}{Theorem}
\newtheorem{lemma}{Lemma}
\newtheorem{corollary}{Corollary}
\newtheorem*{proof*}{Proof}
\newcommand{\jc}{\mathbf{j}}
\newcommand{\conj}[1]{\overline{#1}}
\newcommand{\br}[1]{\left({#1}\right)}
\newcommand{\Com}{\mathbb{C}}
\newcommand{\Vbase}{V^0}
\newcommand{\Vinv}{\gamma}
\newcommand{\diag}[1]{[\![ \, {#1} \, ]\!]}
\newcommand{\One}{\mathbf{1}}
\newcommand{\norm}[1]{\left\|{#1}\right\|}
\newcommand{\inv}[1]{{{#1}}^{-1}}
\newcommand{\s}{\star}
\begin{document}

\bstctlcite{IEEEexample:BSTcontrol}

\title{A Framework for Robust Long-Term Voltage Stability of Distribution Systems}
\author{Hung D. Nguyen\textsuperscript{*}, Krishnamurthy Dvijotham\textsuperscript{*}, Suhyoun Yu, and~Konstantin~Turitsyn

\thanks{Krishnamurthy Dvijotham is with the Department of Mathematics, Washington State University and with the Optimization and Control group, Pacific Northwest National Laboratory, Richland, WA 99354, USA email: dvij@cs.washington.edu.

Hung D. Nguyen, Suhyoun Yu, and Konstantin Turitsyn are with the Department of Mechanical Engineering, Massachusetts Institute of Technology, Cambridge, MA, 02139 USA e-mail: hunghtd@mit.edu, syu2@mit.edu, and turitsyn@mit.edu.

\textsuperscript{*} The first two authors contributed equally to this work.}}

\markboth{IEEE Transactions on Smart Grids,~Vol.~, No.~, September~2018}%
 {Shell \MakeLowercase{\textit{et al.}}}%

\maketitle

\begin{abstract}
Power injection uncertainties in distribution power grids, which are mostly induced by aggressive introduction of intermittent renewable sources, may drive the system away from normal operating regimes and potentially lead to the loss of long-term voltage stability (LTVS). Naturally, there is an ever increasing need for a tool for assessing the LTVS of a distribution system. This paper presents a fast and reliable tool for constructing \emph{inner approximations} of LTVS regions in multidimensional injection space such that every point in our constructed region is guaranteed to be solvable. Numerical simulations demonstrate that our approach outperforms all existing inner approximation methods in most cases. Furthermore, the constructed regions are shown to cover substantial fractions of the true voltage stability region. The paper will later discuss a number of important applications of the proposed technique, including fast screening for viable injection changes, constructing an effective solvability index and rigorously certified loadability limits. %A coalescence condition with which the estimated boundaries and the actual ones match is proposed and validated with numerical simulations.
\end{abstract}

\begin{IEEEkeywords}
Inner approximation, power flow, voltage stability
\end{IEEEkeywords}

\section{Introduction}
The long-term voltage stability (LTVS) is an important class of system voltage stability that studies the impact of slow dynamic components such as tap-changers and thermal loads in several-minute time scales \cite{kundur2004definition}. A system can become unstable following a number of different possible mechanisms, such as the loss of long-term equilibrium. In particular, power injection uncertainties, often induced by distributed renewable energy resources and possibly demand response programs, may push the system out of its viable region, where its steady-state equilibrium ceases to exist \cite{overbye1994power, overbye1995computation}.

Accordingly, the solution of power flow problem, which corresponds to the steady-state equilibrium, plays a critical role in LTVS assessment. The disappearance of solutions implies that the given injection, or the operating point, is beyond the network's solvability limit. That is, the network is incapable of supporting the amount of demanded load. For this reason, it is crucial for a network's LTVS that power system operators are aware of allowed ranges, or security regions, wherein the injections may vary without jeopardizing system stability. 

The LTVS of power systems has been studied for several decades. A review of literature in this problem can be found in  \cite{loparo1990probabilistic, WuSteady, bolognani2016existence, wang2016existence, aolaritei2017distributed}. Quite a few numerical techniques have been proposed to compute the critical loading level associated with LTVS, for example, the continuation power flow (CPF) \cite{ajjarapu1992continuation, Ajjarapu2005fastHopf}, and singular Jacobian-based methods introduced in \cite{lof1993voltage, ali2017transversality}. Apart from these numerical methods, analytic approaches that rely on explicit solvability certificates can provide an alternative solution for constructing the solvability region in all dimensions. The first region-wise certificates were introduced in \cite{Schweppe1975, Zhao2015,ilic1986localized, saric2008applications, makarov2000computation}. Several advantages of these certificates have been discussed in \cite{loparo1990probabilistic}, including the ability to provide security measures and the possibility of reducing computational costs while considering multiple loading directions. The latter advantage follows from the fact that, unlike numerical approaches, the certificates can be reused even after the directions change.

Unfortunately, most region-wise approaches suffer from conservatism issues in which the characterized sets can become overly small. In recent work, Banach's fixed-point theorem has been successfully applied to distribution systems and shown to construct large subsets of the stability region \cite{bolognani2016existence, EssieHungKostya, wang2016existence}. Among these, the results presented in \cite{wang2016existence} (denoted WBBP in our paper based on the authors' last names) dominate all previous results. However, WBBP's solvability criterion requires a specific condition on the nominal point, around which the solvability region approximation is constructed. In the regime where this condition is close to being violated, the estimated regions become conservative as shown in section \ref{sec:largesys}.

In this work, we propose to use Brouwer's fixed-point theorem---a popular fixed-point theorem (especially in market economics \cite{gale1979game})---to overcome the conservative nature of the aforementioned methods. The constructed regions in the parameter space have a simple analytical form, i.e., a norm constraint on an affine function of the power injection inputs. This leads to significant computational advantages, and we outline several power systems applications which can employ the regions we construct. We demonstrate tightness of the solvability regions in the sense that our regions almost ``touch'' the boundary of the true, usually nonconvex, feasibility set. As a side note, while \cite{simpson2017theory2}, \cite{simpson2017theory1} have applied Brouwer's fixed-point theorem to decoupled power flow problems in lossless radial networks \cite{simpson2017theory1, simpson2017theory2}, our approach considers full AC power flow equations which describe the actual lossy systems.

In the scope of this paper, we only consider distribution systems with constant power $PQ$ buses, and we neglect shunt elements, tap changers, switching capacitor banks and other discrete controls on the grid. Though it is possible to incorporate a simple bound on the voltage magnitudes, we do not aim to enforce operational constraints. Finally, the topology of the system is assumed to be unchanged; we do not consider contingencies involving transmission line losses. Some extensions mentioned above are addressed in our more recent work \cite{hung2017inner}.

% A security region is defined as the set of all viable injections, and this region may be useful for a number of power system operation and planning applications, including online security monitoring, corrective scheduling, and expansion planning \cite{Schweppe1975}. 

% Meanwhile, characterizing the security region is theoretically and computationally challenging due to the non-linearity of the power flow equations. In case of large-scale systems, constructing a complete solvability regions is impossible with the only methods that are currently available. Moreover, it has been observed that the true solvability region may not necessarily be convex \cite{Hiskenssolspace, molzahn2013sufficient}, posing an even harsher challenge for online security-constrained functions. 

% To alleviate the computational burden of power-flow security analysis, we developed an analytic framework to \tb{construct} an inner approximation within the solvability region. \tb{The constructed regions in the parameter space have a simple analytical form, i.e., a norm constraint on an affine function of the power injection inputs. This leads to significant computational advantages, and we outline several applications of our technique to power systems that exploit the simple nature of the regions we construct.} We demonstrate tightness of the solvability regions in the sense that our regions almost ``touch'' the boundary of the true, usually nonconvex, feasibility set. 

The main contributions of the paper are as follows. In section \ref{sec:background}, we introduce the set of notations used in the paper, then we derive power flow equations and introduce Brouwer's fixed-point theorem. In section \ref{sec:solcriterion}, we present our main technical contribution, in particular, the sufficient condition for solvability of AC power flow equations that appear in a single-phase distribution system (not necessarily radial) with PQ buses only. This solvability condition allows one to construct inner approximations of the solvability region in multidimensional injection space. Section \ref{sec:application} discusses a number of practical applications of our result: a fast screening technique for speeding up LTVS analysis; the development of a fast-to-compute and informative index for solvability; rigorous techniques for computing certified loading gain limits. Finally, in section \ref{sec:Numeric}, we illustrate our technique's performance by testing on multiple IEEE distribution test feeders using MATPOWER. The simulation results show that our estimated regions can cover up to $80\%$ of the true solvability, thus being sufficiently large for operational purposes, and in most of cases, our approximation dominates WBBP's results.

\section{Background and notation} \label{sec:background}
The following notations will be used throughout this paper:
\begin{align*}
& \Com: \text{ Set of complex numbers} \\
	& [\![ \, \pmb{x} \, ]\!] = \text{diag}(\pmb{x}) \text{ for } \pmb{x}\in\mathbb{C}^n,\conj{\pmb{x}}: \text{ Conjugate of } \pmb{x}\in\mathbb{C}^n \\
    & \One: \text{Vector of compatible size with all entries equal to 1} \\
    & \mathbf{I}: \text{Identity matrix of compatible size}
\end{align*}
\begin{align*}
    & \|\pmb{x}\| = \|\pmb{x}\|_{\infty} = \max_i{x_i}\text{ for } \pmb{x}\in\mathbb{C}^n \\
    & \|\pmb{A}\| = \|\pmb{A}\|_{\infty} = \max_i\sum_j|A_{ij}|\text{ for } \pmb{A}\in\mathbb{C}^{n \times n} \\
    & \frac{\partial F}{\partial x} = \begin{pmatrix} \frac{\partial F_1}{\partial x_1} & \ldots & \frac{\partial F_n}{\partial x_1} \\ \vdots & \vdots & \vdots \\ \frac{\partial F_n}{\partial x_1} & \ldots & \frac{\partial F_n}{\partial x_n}\end{pmatrix} \text{ for } F:\Com^n \mapsto \Com^n
\end{align*}

We study power grids composed of one slack bus and PQ buses. We use $0$ to denote the slack bus and $1,\ldots,n$ to denote the PQ buses. The slack bus voltage $V_0$ will be fixed as a reference value, and $V_1,\ldots,V_n$ are variables. The complex net power injection at bus $i$ will be denoted as $S_i=P_i+\jc Q_i$, where $P_i$ is the active power injection, and $Q_i$ is the reactive power injection. The sub-matrix of the admittance matrix corresponding to the PQ buses can be constructed by eliminating the first row and column, and this sub admittance matrix will be denoted by $\mathbf{Y} \in \Com^{n \times n}$ and its $\br{i,k}$-th entry as $Y_{ik}$. The power balance equations can then be written as
\begin{align}
\conj{Y_{i0}}V_i\conj{V_0}+\sum_{k=1}^n \conj{Y_{ik}}V_i\conj{V_k}=S_i=P_i+\jc Q_i, \quad i = 1,\dots,n.
\end{align}

\noindent Let $\Vbase$ denote the voltage solution associated with the zero injection condition, which corresponds to the zero current state in the absence of shunt elements. Then, $\Vbase$ is the solution to the system of linear equations
\begin{align}
\sum_{k=1}^n \conj{Y_{ik}}\Vbase_i+\conj{Y_{i0}}V^0=0, \quad i = 1,\dots,n.
\end{align}
The power-flow equations can then be expressed in the following compact form:
\begin{equation}\label{eq:PFbasic}
\diag{\pmb{V}}\overline{\pmb{Y}\br{\pmb{V}-\pmb{\Vbase}}} = \pmb{S}  \end{equation}
where $\pmb{\Vbase} = V^0\, \One$ is the voltage vector with all entries are $V^0$. Note that the admittance matrix $\pmb{Y}$ in \eqref{eq:PFbasic} is not the full matrix constructed for all buses of a network, but rather the sub-matrix obtained after removing the slack bus. This sub-matrix is non-singular, thus being invertible \cite{Bergen, wang2016existence}.

\begin{theorem}[Brouwer's fixed-point theorem \cite{spanier1994algebraic}]
Let $F: \mathcal{U} \mapsto \mathcal{U}$ be a continuous map, where $\mathcal{U}$ is a compact and convex set in $\mathbb{C}^n$. Then the map has a fixed-point in $\mathcal{U}$. That is, $F\br{\pmb{x}}=\pmb{x}$ has a solution in $\mathcal{U}$.
\end{theorem}

%The condition \eqref{eq:CondA} ensures that $F$ is locally invertible and that the solution changes smoothly with perturbations to $b$. The condition \eqref{eq:CondB} ensures that the linear approximation to the equations does not change too quickly, and condition \eqref{eq:CondC} requires that the initial point is sufficiently close to being a solution. \eqref{eq:CondD} ensures that these properties remain valid in a sufficiently large region around the nominal solution.
The Brouwer's fixed-point theorem (see Chapter 4, Corollary 8 in \cite{spanier1994algebraic}) has several forms, among which this particular form of the theorem applies to compact and convex sets in Euclidean space. Apart from the assumption on the continuity of the map, the theorem also requires the self-mapping condition; in particular, the domain and the codomain are the same sets. In section \ref{sec:solcriterion}, we will use this theorem to derive sufficient conditions on $\pmb{S}$ so that the conditions guarantee the existence of solutions of the power flow equations. 

In particular, we will implement the following steps. First, we transform the traditional power flow equations into a fixed-point form of voltage variables (see \eqref{eq:AAFP}), so that Brouwer's fixed-point theorem can apply. The resulted fixed-point function will admit the power $\pmb{S}$ as the parameters. Next, we define the set $\mathcal{U}$ as a ball in the voltage space (see the proof of Theorem \ref{cert}). Then we characterize an admissible set of the parameter $\pmb{S}$ which results in a self-mapping function within the ball $\mathcal{U}$. The self-mapping condition is imposed by confining the image of the map within the domain. Then, the admissible set of $\pmb{S}$ will ensure the existence of a power flow solution following Brouwer's fixed-point theorem. Note that there are similar results for more general sets of quadratic equations have been reported in our recent work \cite{dvijotham2017solvability}.

\section{Solvability certificates} \label{sec:solcriterion}
In this section, we will apply Brouwer's fixed-point theorem to the full AC power flow equations in \eqref{eq:PFbasic}. The central result for the existence of a steady-state solution is introduced below.
\begin{theorem}\label{cert}
Let $\pmb{V}_\s$ be a solution to the power flow equations \eqref{eq:PFbasic}. Define
\begin{subequations}
\begin{align}
& \pmb{Z}_\s= \inv{\diag{\conj{\pmb{V}_\s}}}\inv{\conj{\pmb{Y}}}\inv{\diag{\pmb{V}_\s}} \label{eq:Zs}\\
& \pmb{J}_\s=	\begin{pmatrix}
\mathbf{I} &  \conj{\pmb{Z}_\s}\diag{\conj{\pmb{S}_\s}} \\
 \vspace{.01em} &  \vspace{.01em} \\
\pmb{Z}_\s \diag{\pmb{S}_\s} & \mathbf{I}
\end{pmatrix} \\
& \br{\pmb{J}_\s}^{-1}=\begin{pmatrix}
\pmb{M}_\s &  \pmb{N}_\s\\
\vspace{.01em} & \vspace{.01em} \\
 \conj{\pmb{N}_\s} & \conj{\pmb{M}_\s} \end{pmatrix}. \end{align}	
\end{subequations}

Let $\pmb{S} \in \Com^n,r>0$ be arbitrary and define $\Delta \pmb{S}=\pmb{S}-\pmb{S}_\s$. Then, \eqref{eq:PFbasic} has a solution if
\begin{align}
&\frac{1}{r}\norm{\pmb{M}_\s\conj{\pmb{Z}}_\s\conj{\Delta \pmb{S}}+\pmb{N}_\s \pmb{Z}_\s \br{\Delta \pmb{S}}} +\norm{\inv{\pmb{J}_\s}}\norm{\pmb{Z}_\s \diag{\pmb{S}}}r \nonumber \\
& +\norm{\pmb{M}_\s\conj{\pmb{Z}_\s} \diag{\conj{\Delta \pmb{S}}}+\pmb{N}_\s\diag{\pmb{Z}_\s\br{\Delta \pmb{S}}}}\nonumber \\
& +\norm{\pmb{M}_\s\diag{\conj{\pmb{Z}_\s\br{\Delta \pmb{S}}}}+\pmb{N}_\s \pmb{Z}_\s\diag{\Delta \pmb{S}}}\leq 1. \label{eq:Condition}
\end{align}
Further, if $r<1$, the solution $V$ lies in the set
\begin{align}\frac{|V_{\s i}|}{1+r} \leq |V_i|\leq \frac{|V_{\s i}|}{1-r}.	 \label{eq:CondVolt}
\end{align}
\end{theorem}

\begin{proof}[Proof (Theorem \ref{cert})]
Define
\begin{equation}
\zeta\br{\pmb{S}}=\conj{\pmb{Z}_\s}\diag{\conj{\pmb{S}}},\eta\br{\pmb{S}}=\conj{\pmb{Z}_\s \pmb{S}}   
\end{equation}

Using lemma \ref{lem:PFrewrite} in the Appendix, we can rewrite \eqref{eq:PFbasic} as
\begin{align}
\pmb{y}+\zeta\br{\pmb{S}_\s} \conj{\pmb{y}} & =-\eta\br{\Delta \pmb{S}}-\diag{\eta\br{\Delta \pmb{S}}}\pmb{y}-\zeta\br{\Delta \pmb{S}}\conj{\pmb{y}} \nonumber\\
& \quad -\diag{\pmb{y}}\zeta\br{\pmb{S}} \conj{\pmb{y}}	\label{eq:AA}
\end{align}
where $\pmb{y}=[\![\pmb{V}]\!]^{-1}\pmb{V}_\s-\One$, and $[\![\pmb{V}]\!]^{-1}\pmb{V}_\s$ is the component-wise division of  $\pmb{V}_{\s}$ and $\pmb{V}$. Let $\pmb{\alpha}$ denote the {RHS of (5)}. We then have
\begin{equation}
    \begin{pmatrix} \pmb{\alpha} \\ \conj{\pmb{\alpha}}\end{pmatrix} = \begin{pmatrix} \mathbf{I} & \conj{\pmb{Z}_\s}\diag{\conj{\pmb{S}_\s}} \\ \pmb{Z}_\s \diag{\pmb{S}_\s} & \mathbf{I} \end{pmatrix}\begin{pmatrix} \pmb{y} \\ \conj{\pmb{y}} \end{pmatrix}
\end{equation}
or
\begin{equation}
   \inv{\pmb{J}_\s} \begin{pmatrix} \pmb{\alpha} \\ \conj{\pmb{\alpha}}\end{pmatrix} =  \begin{pmatrix} \pmb{y} \\ \conj{\pmb{y}} \end{pmatrix} 
\end{equation}
{Solving for $\pmb{y}$ from the equation right above, we can see that
\begin{equation}
\pmb{y} = M_\s\pmb{\alpha} + N_\s\overline{\pmb{\alpha}},
\end{equation}
}
\noindent {so that after expanding the expression for $\pmb{\alpha}$, equation} \eqref{eq:AA} can be rewritten as
\begin{align}
\pmb{y}& =-\br{\pmb{M}_\s \eta\br{\Delta \pmb{S}}+\pmb{N}_\s\conj{\eta\br{\Delta \pmb{S}}}} \nonumber \\
& \qquad -\br{\pmb{M}_\s \diag{\pmb{y}}\zeta\br{\pmb{S}}\conj{\pmb{y}}+\pmb{N}_\s\diag{\conj{\pmb{y}}}\conj{\pmb{\zeta}\br{\pmb{S}}}\pmb{y}}\nonumber\\
& \qquad -\br{\pmb{M}_\s \diag{\eta\br{\Delta \pmb{S}}}+\pmb{N}_\s \conj{\zeta\br{\Delta \pmb{S}}}}\pmb{y} \nonumber \\
& \qquad - \br{\pmb{M}_\s \zeta\br{\Delta \pmb{S}}+\pmb{N}_\s\diag{\conj{\eta\br{\Delta \pmb{S}}}}}\conj{\pmb{y}} \label{eq:AAFP}
\end{align}

We apply Brouwer's fixed-point theorem to \eqref{eq:AAFP} with the set $\{\pmb{y}:\norm{\pmb{y}}\leq r\}$. We take the norm of the RHS of \eqref{eq:AAFP} and apply triangle inequality and the definition of the matrix norm to obtain:
\begin{align}
& \norm{\pmb{M}_\s \eta\br{\Delta \pmb{S}} + \pmb{N}_\s\conj{\eta\br{\Delta \pmb{S}}}} +\br{\norm{\pmb{M}_\s}+\norm{\pmb{N}_\s}}\norm{\zeta\br{\pmb{S}}}r^2 \nonumber\\
& +\norm{\pmb{M}_\s \zeta\br{\Delta \pmb{S}} + \pmb{N}_\s \diag{\conj{\eta\br{\Delta \pmb{S}}}}}r\nonumber \\
& +\norm{\pmb{M}_\s \diag{\eta\br{\Delta \pmb{S}}}+\pmb{N}_\s\conj{\zeta\br{\Delta \pmb{S}}}}r \label{eq:AAb}
\end{align}
Since \eqref{eq:AAb} is an upper bound on the norm of the RHS of \eqref{eq:AAFP}, Brouwer's fixed-point theorem guarantees that \eqref{eq:AAFP} has a solution if \eqref{eq:AAb} is smaller than $r$.
Dividing \eqref{eq:AAb} by $r$ and requiring the result to be smaller than $1$, we obtain \eqref{eq:Condition} which establishes the theorem.
% \begin{align}
% &\frac{1}{r}\norm{\pmb{M}_\s \eta\br{\Delta \pmb{S}}+\pmb{N}_\s\conj{\eta\br{\Delta \pmb{S}}}} +\norm{\inv{\pmb{J}_\s}}\norm{\zeta\br{\pmb{S}}}r \nonumber\\
% &+\norm{\pmb{M}_\s \zeta\br{\Delta \pmb{S}}+\pmb{N}_\s \diag{\conj{\eta\br{\Delta \pmb{S}}}}}\nonumber \\
% & +\norm{\pmb{M}_\s \diag{\eta\br{\Delta \pmb{S}}}+\pmb{N}_\s\conj{\zeta\br{\Delta \pmb{S}}}}\leq 1 \label{eq:conditionr}
% \end{align}
Moreover, the solution will exist in the set $\norm{ [\![\pmb{V}]\!]^{-1}\pmb{V}_\s - 1}\leq r$, or
\begin{equation}
    \left |\frac{V_{\s i}}{V_i}-1\right | \leq r \implies  |V_{\s i}-V_i| \leq r |V_i|
\end{equation}
Applying the triangle inequality, we obtain $|V_{\s i}|-|V_i| \leq r |V_i|,|V_i|-|V_{\s i}| \leq r |V_i|$ or
\begin{equation} \label{eq:ballr}
    \frac{|V_{\s i}|}{1+r} \leq |V_i| \leq \frac{|V_{\s i}|}{1-r}
\end{equation}
\end{proof}

The sufficient condition \eqref{eq:Condition} defines a convex approximation of the solvability set. The convexity can be seen as the left hand side of \eqref{eq:Condition} is the sum of four terms of the type $\norm{\pmb{A}^T \Delta \pmb{S} + \pmb{b}}$. Each individual term is a convex function due to the triangle inequality. Then the convexity of the approximated sets follows from the fact that the sum of convex functions is convex.

% Apart from the convexity property, the intuitive interpretation of this solvability condition, together with the condition \eqref{eq:ConditionInf}, will be discussed later in this section.}

A related point to consider is that, unlike the approach based on Banach's fixed-point theorem, the Brouwer approach developed in this work does not guarantee the uniqueness of the solution. However, it can provide other information about the solutions, namely the voltage range within which the solutions will lie (the set defined by \eqref{eq:CondVolt}). Knowing the solvable voltage range is indeed crucial to practical operation of power systems as the operators concern about not only whether steady-state equilibrium exists but also whether such solutions are compliant with the voltage requirements. Another advantage of the Brouwer approach is that its central condition--the self-mapping condition--can easily handle operational constraints such as current thermal and power generation limits. Thus the technique lends itself to feasibility constrained problems, for instance, to estimate the feasibility set of an Optimal Power Flow. More details are presented in our more recent work \cite{hung2017inner}.

%{\color{red}{Dj: can you add some discussion about the homotopy path which connects the solutions?}}

As our primary focus, is to construct inner approximations of the solvability region around a base operating point, we necessarily assume the solvability of such base point. In normal situations, the current operating point is solvable and is a suitable base operating point. Otherwise, the zero power and zero current condition is a trivial base operating point. The solvability of the operating point, consequently, implies that the base Jacobian $\pmb{J}_\s$ is non-singular.

The solvability condition presented in \eqref{eq:Condition} is particularly useful if one is seeking for a solution that lies within some voltage bounds characterized by $r$. The value of $r$ reflects the size of the solvability region; hence if one's focus is on solvability of given injections, then one might be interested in finding the largest possible estimated subsets, which can be found by optimizing the value of $r$, thus directing to the following result.

\begin{corollary} \label{cor:union}
Let $\mathcal{U}_r$ denote the set of $s$ satisfying \eqref{eq:Condition}. Define the set
\begin{equation}
    {\mathcal{U}_r}=\cup_{0<\epsilon \leq 1} \mathcal{U}_{\epsilon r}.
\end{equation}
Then, the two following statements hold true.
\begin{itemize}
    \item For every $\pmb{S} \in {\mathcal{U}_r}$, there exists a solution $\pmb{V}$ to \eqref{eq:PFbasic} satisfying \eqref{eq:CondVolt}.
    \item Further, for every $\pmb{S}$ satisfying
    \begin{align}
    & 2\sqrt{\norm{\pmb{M}_\s\conj{\pmb{Z}}_\s\diag{\conj{\Delta \pmb{S}}}+\pmb{N}_\s \pmb{Z}_\s \diag{\Delta \pmb{S}}}\norm{\inv{\pmb{J}_\s}}\norm{\pmb{Z}_\s \diag{\pmb{S}}}} \nonumber \\
    & +\norm{\pmb{M}_\s\conj{\pmb{Z}_\s} \diag{\conj{\Delta \pmb{S}}}+\pmb{N}_\s\diag{\pmb{Z}_\s\br{\Delta \pmb{S}}}} \nonumber \\
    & +\norm{\pmb{M}_\s\diag{\conj{\pmb{Z}_\s\br{\Delta \pmb{S}}}}+\pmb{N}_\s \pmb{Z}_\s\diag{\Delta \pmb{S}}}\leq 1 \label{eq:ConditionInf}
    \end{align}
\end{itemize}
there is a solution $\pmb{V}$ to \eqref{eq:PFbasic}. \end{corollary}

Both claims of Corollary \ref{cor:union} can be proven following Theorem \ref{cert} by showing that there exists an equivalent certificate in the form of \eqref{eq:Condition} characterized by an associated value of radius $r$. For the first claim, each injection $\pmb{S}$, which lies in $\mathcal{U}_r$, will also belong to some subset $\mathcal{U}_{\epsilon r}$ contained in $\mathcal{U}_r$. Thus, the associated radius is simply $\epsilon r$. The second claim is proven by choosing a specific value of $r$ given by
\begin{equation}
    r = \sqrt{\frac{\norm{\pmb{M}_\s\conj{\pmb{Z}}_\s\diag{\conj{\Delta \pmb{S}}}+\pmb{N}_\s \pmb{Z}_\s \diag{\Delta \pmb{S}}}}{\norm{\inv{\pmb{J}_\s}}\norm{\pmb{Z}_\s \diag{\pmb{S}}}}}
\end{equation}
which transforms \eqref{eq:Condition} into \eqref{eq:ConditionInf}.
%, we can choose $r$ which makes the Cauchy-Schwarz inequality applying to the first two terms of \eqref{eq:Condition} become equality.

% The first claim of Corollary \ref{cor:union} can be easily seen by noting that $\mathcal{U}_r$ is indeed the union of all $\mathcal{U}_{r'}$ where $0 \leq r' \leq r$. Therefore, any $\pmb{S} \in {\tb{\mathcal{U}_r}}$ will satisfy \eqref{eq:Condition} with an associated value of $r$ characterized by the scalar $\epsilon$. Then Theorem \ref{cert} certifies the existence of a solution. The second claim can be also proven following Theorem \ref{cert}. Condition \eqref{eq:ConditionInf} is equivalent to \eqref{eq:Condition} with $r$ which makes the Cauchy-Schwarz inequality applying to the first two terms of \eqref{eq:Condition} become equality.
Furthermore, the fact that $\mathcal{U}_r$ is the union of all $\mathcal{U}_{r'}$ where $0 \leq r' \leq r$ implies that, for more restricted range of voltage solutions, the corresponding subset $\mathcal{U}_{r'}$ will be contained in the subset $\mathcal{U}_{r}$. This result reflects the fact that more varying power injections will cause a wider range of voltage variation. 

The proposed solvability conditions \eqref{eq:ConditionInf} and \eqref{eq:Condition} share the same physical interpretation which reflects the behavior of power systems. Both conditions are norm constraints on the parameter variation $\Delta \pmb{S}$ representing loading conditions. As $\Delta \pmb{S}$ increases, the norm of $\Delta \pmb{S}$ increases as well and will eventually violate the solvability conditions. Since our conditions are sufficient but not necessary conditions for the existence of a solution, $\Delta \pmb{S}$ may fail to satisfy our condition before the true point of voltage collapse. Therefore, the constructed region is conservative. The numerical evaluation in section \ref{sec:largesys} studies this issue by looking at the ratio between the maximum loading for which $\Delta \pmb{S}$ satisfies condition \eqref{eq:ConditionInf}. Moreover, these conditions also involve the inverse of the Jacobian at the base point. As the base point moves toward the solvability boundary, the Jacobian becomes close to singular and the norm of its inverse increases. In short, a slight increase in the loading level may violate the sufficient solvability criterion, implying that the solvability margin tends to decrease as the base point gets closer to solvability boundary.

To gain insight into the region characterized by the proposed certificates, we consider two special cases: the first is when the base operating condition has the zero power and the zero current in Appendix \ref{app:nullinject}, and the second is when the estimated bound almost touches the real one under the coalescence condition presented in section \ref{sec:application}

In comparison to WBBP's estimate introduced in \cite{wang2016existence}, one of the least conservative known approximations, our constructions outperform in most of the studied cases. More specifically, we can provide a rigorous proof for the dominance of our estimations over WBBP’s results for the zero loading condition (see Appendix \ref{app:nullinject} and also \cite{dvijotham2017solvability}). For nonzero power operating points, simulation results presented in section \ref{sec:Numeric} also show that our estimates are most often superior. We observed a few exceptional cases wherein WBBP's estimations contain ours (\ref{fig:sol123bus}(b)), but we do not aim to provide any mathematical comparison between the two approaches for general base operating conditions. Another advantage of our framework is that we can obtain a region around any nominal solution $\br{\gamma^\star,s^\star}$ with non-singular Jacobian, while WBBP's approach requires a stronger assumption. Otherwise, it should be noted that our comparison is not entirely fair to WBBP, as the WBBP's condition guarantees uniqueness, whereas our condition only certifies existence.

% There are a few exceptional cases wherein WBBP's estimations contain ours as illustrated in \ref{fig:sol123bus}(b). Apart from what shown by numerical simulations, we would not aim to provide any mathematical comparison between the two approaches for general base operating conditions. Another advantage of our framework is that we can obtain a region around any nominal solution $\br{\gamma^\star,s^\star}$ with non-singular Jacobian, while WBBP's approach requires a stronger assumption. Otherwise, it should be noted that our comparison is not entirely fair to WBBP, as the WBBP's condition guarantees uniqueness, whereas our condition only certifies existence and is weaker in this sense.

\section{Applications} \label{sec:application}
The proposed sufficient solvability criteria are useful for a number of important operational functions including, but not limited to, verifying viable injections, loadability limit monitoring, and security-constrained optimization functions. As discussed at the end of section \ref{sec:solcriterion}, it is possible to construct approximated convex regions which each has a simple analytic form. Such convex shapes then can be incorporated into the constrained optimization by replacing the original nonlinear power flow equations. Nevertheless, in the scope of this paper, we only consider three immediate applications: fast screening for viable injection change, effective solvability index, and certified loadability limit estimation.

\subsection{Fast screening for viable injection change} \label{sec:fastscr}

The verification problem mainly concerns whether an injection is viable or not. Carrying out the verification over the real solvability region is challenging because the actual boundary is difficult to construct. Alternatively, we propose to use the approximated region characterized by sufficient solvability criteria. If the approximated solvability region is convex, the verification problem is indeed a membership oracle, a basic algorithmic convex geometry problem \cite{vempala2010recent}. Once an injection is verified, the corresponding operating point is guaranteed to be solvable. Otherwise, other detailed tests need to take place. We introduce an algorithm for the purpose of fast screening below.

% Particularly, we consider a number of scenarios characterized by different possible injections. A scenario is verified if the injection level is within the solvability limit. An unverified injection is a dangerous operating condition as it can place the system in an \textit{in-extreme} state following a static voltage collapse. The injection verification problem arises in some contexts with uncertain injections, for example, due to renewables or customer behaviors. In real time operation, the need of a fast screening tool that identifies dangerous operating conditions is crucial.

The screening problem usually considers a cloud of points in the injection space, or the potential injection set, and the task is as follows:

\textbf{Screening problem:} Given a set of potential injections $\mathcal{P}$, classify all solvable and unsolvable points to sets $\mathcal{F}$ and $\mathcal{I}$, respectively.

An injection is solvable if it belongs to a subset defined by criterion \eqref{eq:ConditionInf}. In our approach, we construct multiple solvability subsets to screen all solvable injection points. As one subset can be reused to verify multiple points, the screening time can be reduced significantly.

A fast screening procedure is presented in Algorithm \ref{alg:fastscr}. In the proposed algorithm, power flow (PF) (or continuation power flow (CPF) if PF does not converge) only needs to perform for the candidate scenario, or seed points. If any point in the given potential injection set satisfies the solvability condition \eqref{eq:ConditionInf} associated with the seed point, it is certified as solvable. More specifically, one needs to verify the condition \eqref{eq:ConditionInf} while assigning the seed point power level as $\pmb{S}_\s$ and the potential injection level as $\pmb{S}$. Among uncertified points which may or may not be solvable, we select another seed point and continue the screening process until all points from the potential injection set are classified.

\begin{algorithm}[t]
\caption{Fast screening algorithm based on Brouwer's theorem}
\label{alg:fastscr}
\begin{algorithmic}[1]
\STATE {Store all potential injections in a set $\mathcal{P}$}
\STATE {Initialize a set $\mathcal{I}$ and $\mathcal{F}$ as an empty set}
\STATE {\textbf{While} $\mathcal{P}$ is not empty \textbf{do}}

\begin{itemize}
\item Choose the first point as a seed\\
\item Solve PF (or CPF) for the seed and remove the seed from $\mathcal{P}$\\
	{\hspace{5pt} {\textbf{if} solvable  \textbf{then}}}\\
    {\hspace{15pt} add the seed to $\mathcal{F}$}\\
    	{\hspace{15pt} \textbf{for} $i = 1,\cdots,\textrm{card}(\mathcal{P})$ \textbf{do}}\\
    		{\hspace{25pt} \textbf{if} $\mathcal{P}(i)$ satisfies \eqref{eq:ConditionInf} w.r.t the seed \textbf{then}}\\
                     {\hspace{30pt} remove $\mathcal{P}(i)$ from $\mathcal{P}$ and add it to $\mathbf{F}$}\\
            {\hspace{25pt} \textbf{end if}}\\
        {\hspace{15pt} \textbf{end for}}\\
     {\hspace{5pt} {\textbf{else}}}\\
   {\hspace{15pt} add the seed to $\mathcal{I}$}\\
   {\hspace{5pt} \textbf{end if}}
\end{itemize}

\STATE {Return $\mathcal{I}$ and $\mathcal{F}$}
\end{algorithmic}
\end{algorithm}

In contrast, without a fast screening procedure, one typically solves the PF or CPF for each scenario in the set $P$ to determine its solvability. Figure \ref{fig:fastscr} illustrates the performance of the fast screening algorithm against a CPF-based one in terms of elapsed time. In this simulation, the potential injection set is generated uniformly randomly which mimics uncertain renewable injections. As expected, the time acceleration factor---{a multiplier between two processing times consumed in the fast and CPF-based screening procedures}---tends to increase when more scenarios are considered. The simulation results show that the proposed method can speed up the screening up to $400$ times. According to our numerical observation, the performance of the fast screening method depends on the density of the potential injection set. The more concentrated the potential injection points are, the faster the screening outperforms.
% \vspace*{-\baselineskip} 
\begin{figure}[!ht]
    \centering
    \includegraphics[width=1.\columnwidth]{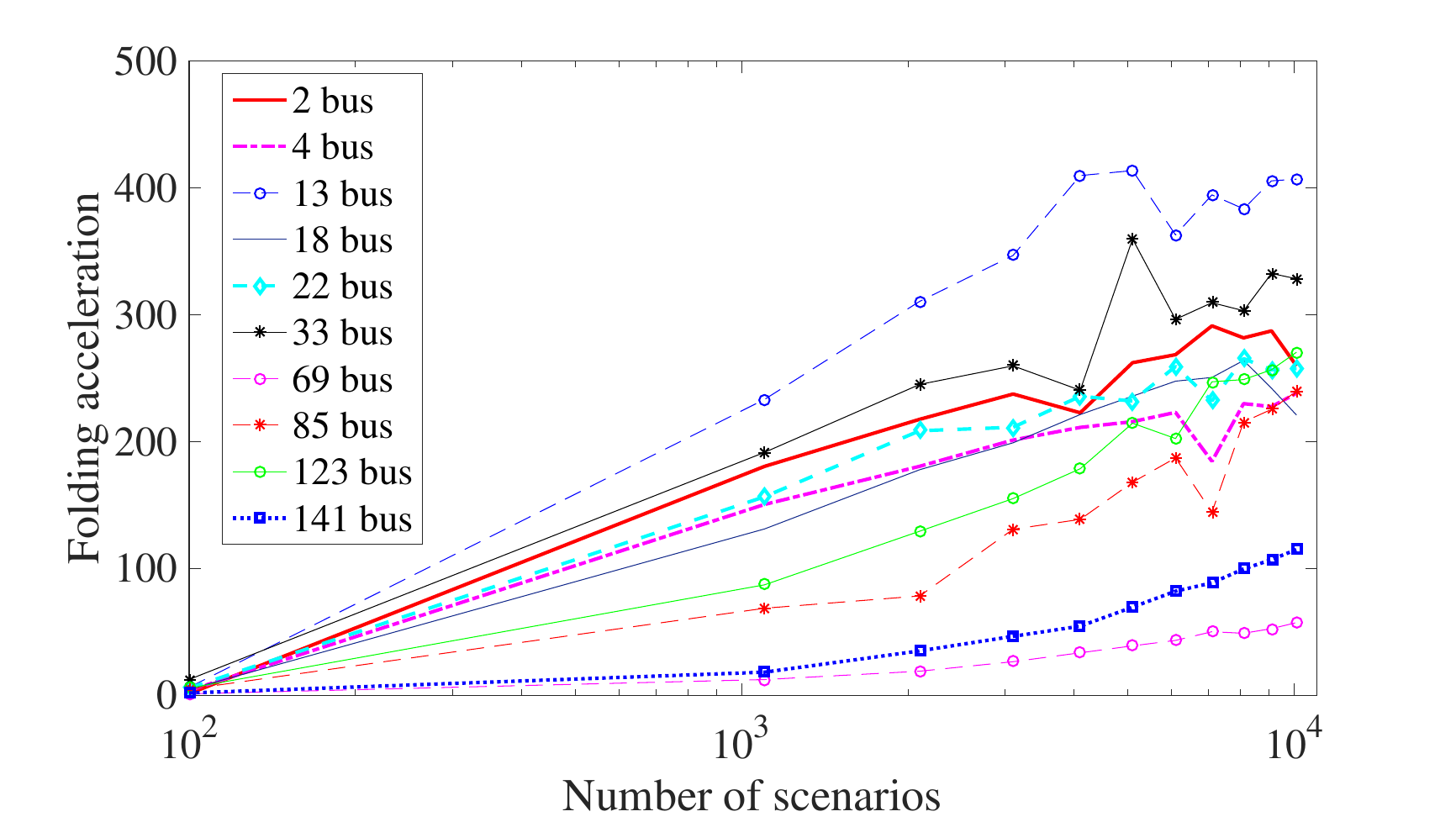}
	\caption{The performance of the fast screening algorithm against a CPF-based approach}
     \label{fig:fastscr}
\end{figure}

Moreover, the performance of the proposed fast screening technique depends on the relative sizes of the feasible and infeasible sets, simply because the infeasible points cannot be verified by our certificates. Many infeasible points consequently cause the proposed approach to take more computation time compared to the CPF-based method as we have to construct our certificates. The folding acceleration result presented in Figure \ref{fig:fastscr} corresponds to a small set of infeasible points that make up less than $5\%$ of the potential injection points. The fast screening method is primarily proposed for monitoring purposes where one needs to assess whether the system will be ``safe'' in the next a few minutes. Within this short period, excessive predicted infeasible injections usually indicate that the system will likely exhibit voltage stability problems.

\subsection{Effective solvability index} \label{sec:probmeas}
The above fast screening needs to verify all potential injection points by constructing multiple certificates; the proceeding section focus on the following problem.

\textbf{Effective solvability index calculation:} Given a set of potential injections $\mathcal{P}$, compute the percentage of injection points which can be verified with a single certificate.

This problem is to demonstrate the effectiveness of a single certificate. To solve the problem, we first construct a solvability subset based on condition \eqref{eq:ConditionInf} then calculate the percentage of points that lie inside the constructed region. This percentage can serve as an estimated measure of solvability---or as we denote it as the effective solvability index---to help the system operators to quickly make decisions on whether or not to continue operating the system (with the same settings) under a given level of uncertainty of power injections. An example of the procedure calculating the index is described below.
% \vspace*{-\baselineskip} 
\begin{figure}[ht]
    \centering
    \includegraphics[width=1.\columnwidth]{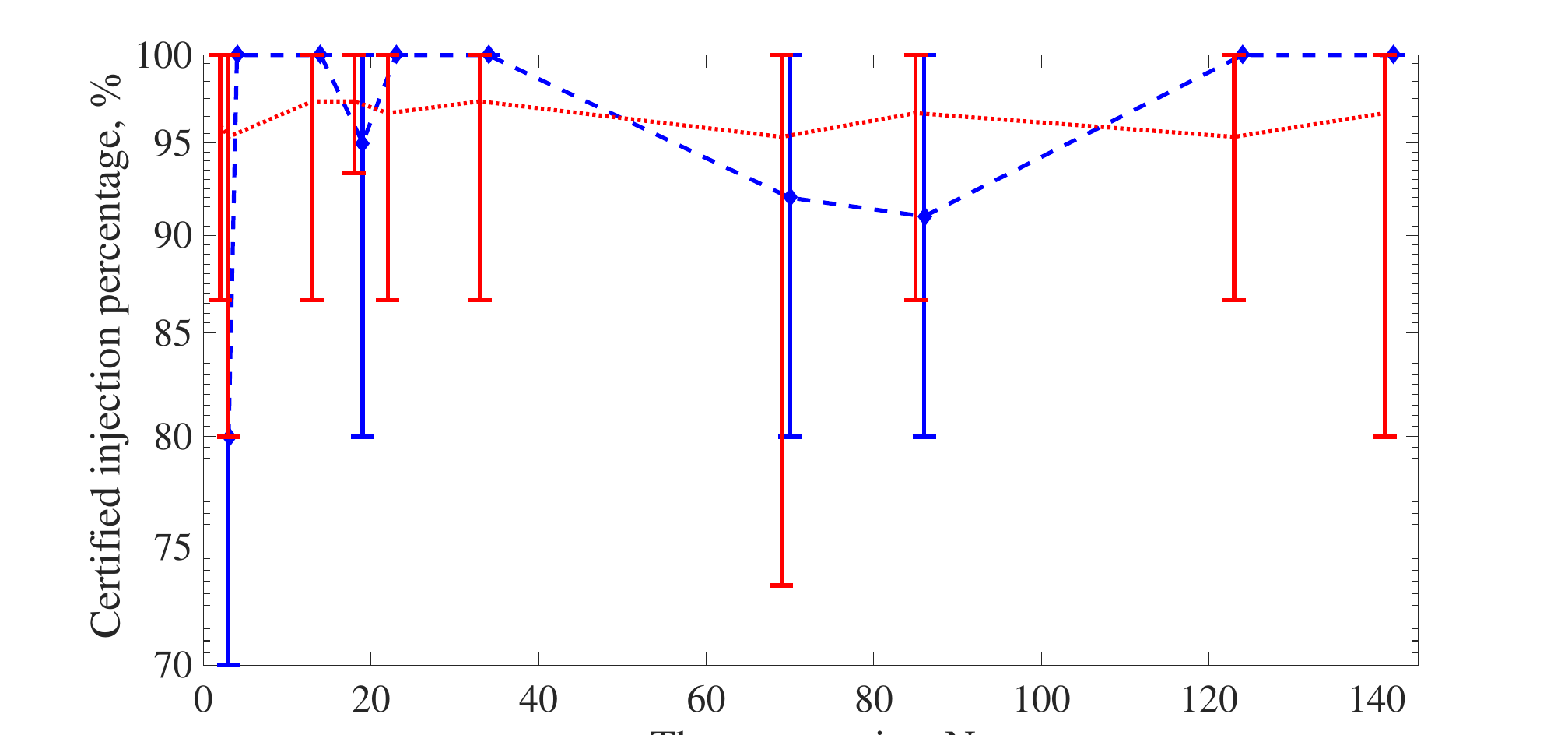}
	\caption{The effective solvability index}
		\label{fig:renewable}
\end{figure}

Figure \ref{fig:renewable} illustrates the performance, in terms of the percentage of certified random injections, of a single certificate characterized by \eqref{eq:ConditionInf} for several IEEE distribution test feeders with renewables. We modify the test cases to accommodate $\sim$35$\%$ renewable penetration by installing photovoltaic panels (PVs) at more than one third of the load buses. The PV locations are selected uniformly and randomly. Potential injection sets are generated by varying the PV outputs with two separate methods; the first method randomly selects PV outputs from a uniform distribution over a range of $\pm 500\%$ of the base load/injection, while the second method randomly and uniformly selects from the forecast data for May 03, 2017 available online \cite{PVforecast}. In Figure \ref{fig:renewable}, the results were recorded in the percentage of number of injections certified, where the red and blue data corresponds to the first and second PV output selection methods, respectively.

As indicated in the plot, one single certificate, on average, is able to certify $\sim95\%$ of the random injection sets, and even the lower bound of this percentage is well over the majority. For all test cases, it is also evident that the certificate may even extend to certify exhaustively the potential sets. The simulation thus illustrates that, even with the high uncertainty and randomness added from the renewable injections, one certificate can encompass a considerable portion of the solvability region. Apparently, though the performance of one certificate might depend on the test case configurations as well as the random injection sets, the operators can carry out this quick test to roughly estimate the system viability for a given amount of uncertain injections. If the index above an acceptable level of security, say $0.95$, no more assessment is required.

\subsection{Certified admissible gain limits}
In practice, the system operators may be interested in the distance between the current operating point and the insolvable boundaries. For each possible loading direction, one can normalize the corresponding incremental loading vector $\Delta \pmb{S}$, and then solve for the maximum gain, $\lambda_{max}$, for which the system remains solvable. The smallest $\lambda_{max}$ of all possible loading directions, or loading gain limit, can also be used to quantify the system stability. We can define the associated problem formally as the following.

\textbf{Loading gain limit problem}: For a given base injection power $\pmb{S}_\s$, find the maximum gain $\lambda_{max}$ for which the system will remain solvable along all possible normalized loading direction $\Delta \pmb{S}$, or $\norm{\Delta \pmb{S}} = 1$.

Solving for the loading gain limit can be formulated as a $\min-\max$ optimization problem described below:
\begin{align} \label{eq:certgain}
&\underset{\Delta{u}}{\text{min}}
\quad \underset{\lambda}{\text{max}}\quad \lambda \\
& \text{subject to} \quad \pmb{S}_\s + \lambda \Delta{\pmb{S}} \,\, \text{is solvable}, \nonumber\\
& \qquad \qquad \quad 0 \leq \lambda,\, \|\Delta{\pmb{S}}\| = 1. \nonumber
\end{align}

The optimization problem \eqref{eq:certgain} is difficult to solve in general; however, solvability condition \eqref{eq:ConditionInf} can estimate a lower bound called the certified admissible gain, $\lambda_{CAG}$. In our approach, we will characterize $\lambda_{CAG}$ with the help of Theorem \ref{theo:certlim}.

\begin{table}[ht]
\centering
  \begin{tabular}{| c || c | c |}
    \hline
    System size, N  & $\lambda_{CAG}/\lambda_R$ & $\lambda_{CAG}/\lambda_B$ \\ \hline
    3  & 0.7456 & 0.9507\\ \hline
    18  & 0.4102 & 0.6173\\ \hline
    33  & 0.5084 & 0.6629\\ \hline
    69  & 0.3123 & 0.4080\\ \hline
    123  & 0.5759 & 0.7708\\ \hline
  \end{tabular}
\caption{Certified gain limits vs. the estimated and true ones}
  \label{table:CAG}
\end{table}

\begin{theorem} \label{theo:certlim}
Let $\lambda_M$ be the larger positive root of the following equation
\begin{align}
& 2\sqrt{ \left(\norm{\pmb{M}_\s\conj{\pmb{Z}}_\s}+\norm{\pmb{N}_\s \pmb{Z}_\s}\right) \norm{\inv{\pmb{J}_\s}}\norm{\pmb{Z}_\s} \lambda \left(\norm{\pmb{S}_\s} + \lambda \right)} \nonumber \\
& + 2 \left(\norm{\pmb{M}_\s\conj{\pmb{Z}}_\s}+\norm{\pmb{N}_\s \pmb{Z}_\s}\right) \lambda = 1. \label{eq:strongcert}
\end{align}
Then, the certified admissible gain can be computed as 
\begin{equation}
\lambda_{CAG} = \min\{\lambda_M, 0.5/ \left(\norm{\pmb{M}_\s\conj{\pmb{Z}}_\s}+\norm{\pmb{N}_\s \pmb{Z}_\s}\right)\}.
\end{equation}
\end{theorem}

The proof of Theorem \ref{theo:certlim} is presented in Appendix \ref{proof:certlim}. Moreover, it is worth mentioning the certified gain is a lower bound of the optimal objective value of the min-max problem \eqref{eq:certgain}. The enforced solvability constraint ensures that the loading level $\pmb{S} = \pmb{S}_\s + \lambda_{CAG} \Delta{\pmb{S}}$ is solvable for all normalized loading directions $\Delta{\pmb{S}}$. In other words, the certified gain is independent of incremental loading directions but depends on the base operating point.

To validate the certified loading gain, we compare it with the estimated gain $\lambda_B$ from condition \eqref{eq:ConditionInf} and the actual gain limit, $\lambda_R$. We choose the loading direction where all loads increase equally, as the system will otherwise soon become stressed and the stability margin will decrease significantly. Table \ref{table:CAG} shows that, for this specific loading direction, the certified gain is around $30\%-80\%$ of the true gain limits. For the estimated gain from \eqref{eq:ConditionInf}, the ratios are higher, or even equal to $1$ in some cases. When the ratio reaches $1$, the condition \eqref{eq:ConditionInf} and its strong form \eqref{eq:strongcert} are equivalent along the homogeneous loading direction.

\section{Numerical studies}\label{sec:Numeric}

\subsection{Run-time analysis} \label{sec:efficiency}
% \begin{figure}[ht]
%     \centering
%     \includegraphics[width=1\columnwidth]{complexity.eps}
% 	\caption{A log-log plot for computation time vs. the system size}
%      \label{fig:efficiency}
% \end{figure}
To generate a solvability certificate \eqref{eq:ConditionInf}, the main burden is to explicitly compute for the matrix $\pmb{Z}_\s$ and $\pmb{J}_\s^{-1}$ that requires the inverse of the admittance matrix $\pmb{Y}$ and $\pmb{J}_\s$, respectively. In general, the computation effort involved in such inversion depends strongly on the properties of the network, and is hard to characterize a-priori, with empirical studies suggesting that the scaling is faster than  $\mathcal{O}(n^2)$ \cite{alvarado1976computational}. Instead of discussing the theoretical complexity, here we only assess the computational efficiency of generating certificates under the assumption that the impedance matrix and the inverse of the Jacobian are given. The matrix $\pmb{Z}_\s$ needs to be computed only once unless the network topology changes, and for a base operating point, the nominal matrix $\pmb{J}_\star$ and its inverse is fixed. Consequently, such matrices can be reused while repeatedly generating the certificate \eqref{eq:ConditionInf}, for example, in the screening problem considered in section \ref{sec:fastscr}. Then, a run-time analysis is performed to estimate the running time needed to generate a single certificate as the system size, $N$, increases. In particular, for each test case, we repeatedly generate the same certificate and measure the corresponding elapse time. The codes are written in MATLAB and implemented in a regular laptop with a configuration of 2.8 GHz Intel Core i7 CPU and 16 GB Memory. The average elapsed time to generate a certificate for $4-$node, $22-$node, $69-$node, and $141-$node test feeders are $1.25\cdot10^{-4}\,s$, $2.10\cdot10^{-4}\,s$, $9.10\cdot10^{-4}\,s$, and $3.44\cdot10^{-3}\,s$, respectively.

\subsection{A toy example and the coalescence condition} \label{sec:2bustheo}
Consider a $2$-bus test case with a slack bus with $V_0 =1\angle 0$, and one load bus with unknown voltage $V\angle\theta$ consuming an amount of apparent power $S = P + jQ$. The line connecting the two buses has an impedance of $R+jX$. In the base case, we have $S_\s = 0$ and $V_\star = V_0$. Applying the condition \eqref{eq:ConditionInf} to the $2$-bus system yields the inequality below:
\begin{align} \label{eq:est2bus}
&\sqrt{(R^2+X^2)(P^2+Q^2)}\leq \frac{1}{4}.
\end{align}

In the following simulations, we construct the estimated solvability boundaries and the real boundaries while varying the $R/X$ ratio. Very high $R/X$ ratios are not practical, yet we examine such extreme cases to illustrate the conditions for the coalescence between the approximated and the actual solvability boundaries.
\begin{figure}[ht]
    \centering
    \subfigure[$R/X = 0.1$]{{\includegraphics[scale = 0.26]{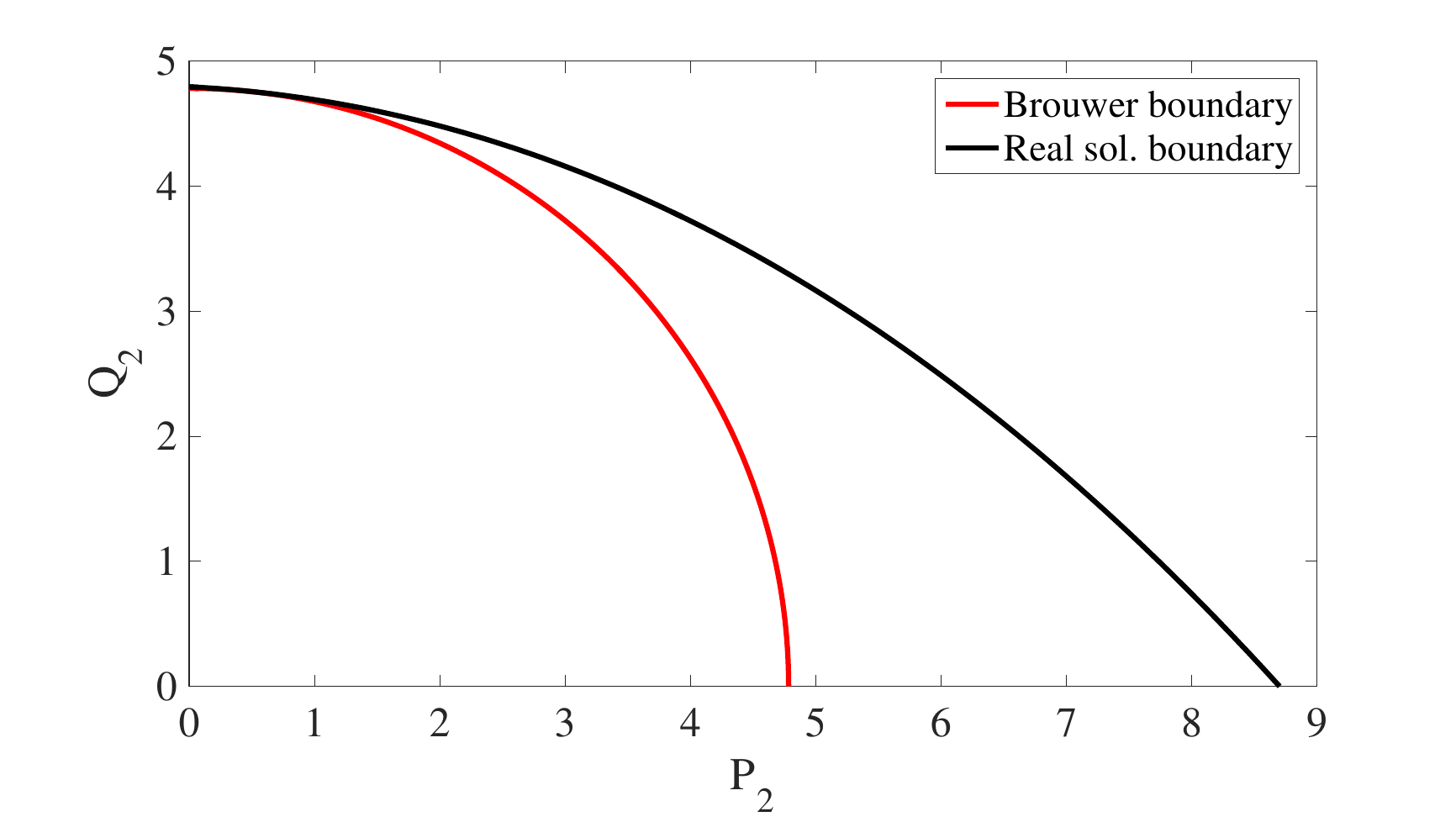}}}
   \subfigure[$R/X = 10$]{{\includegraphics[scale=0.26]{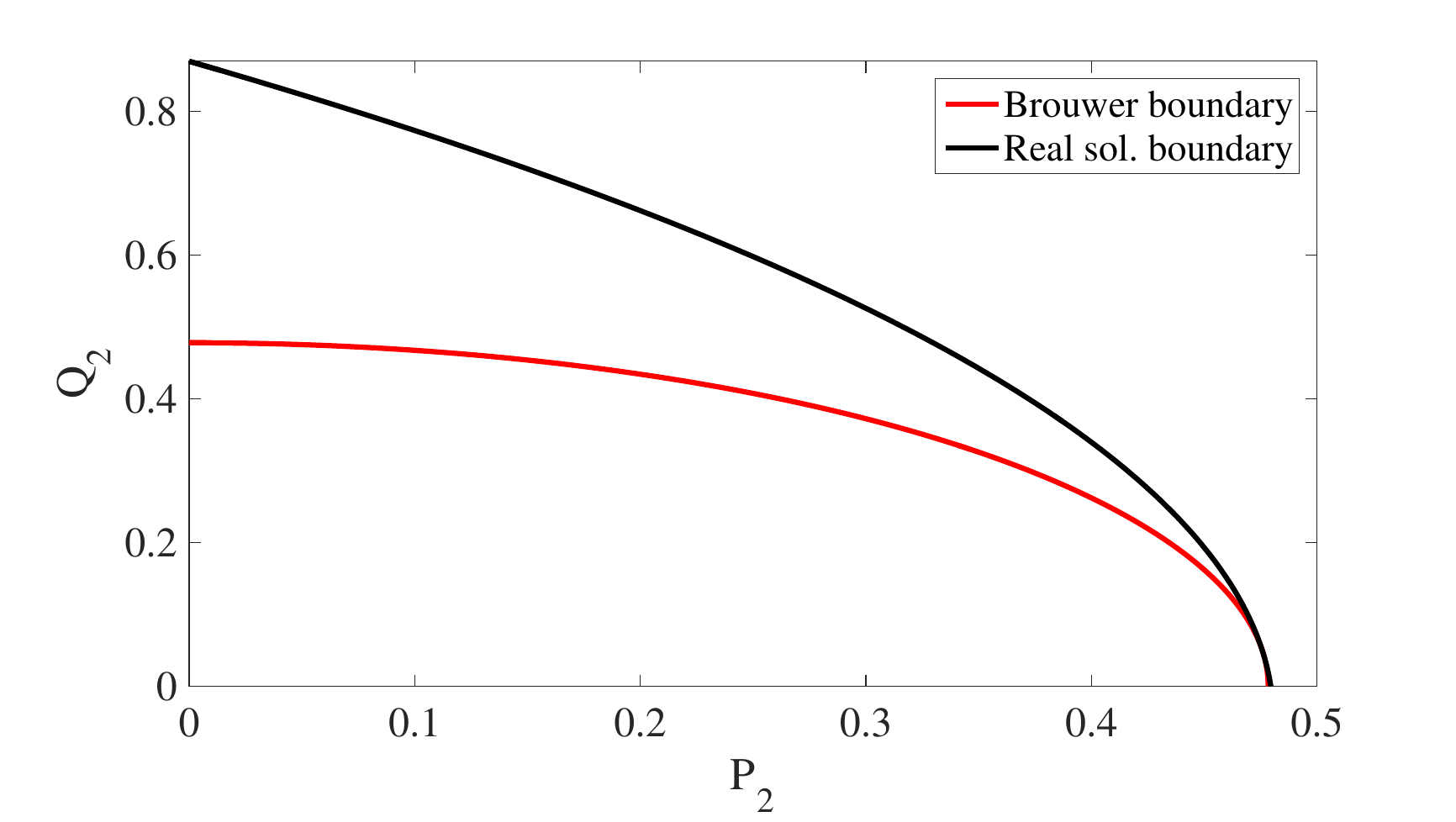}}}%
    \caption{Solvability regions of a two-node test feeder for a zero base load and different $R/X$ ratios}%
    \label{fig:2bus}%
\end{figure}
Figure \ref{fig:2bus} is plotted in $PQ$ space where the solid black curves represent the real boundaries, and the solid red curves represent the estimated boundaries using Brouwer approach. From Figure \ref{fig:2bus}, the most important observation is that the two boundaries may be tight in some directions that satisfies the condition $P/Q = R/X$, a ratio which we will refer to as the ``matching'' ratio. Note that the equation $P/Q = \cot(\phi)$ holds, where $\cos(\phi)$ is the load power factor. The coalescence condition is then proved as below.

For the $2$-bus toy system, the condition for the existence of a real solution can be expressed as \cite{Cutsem, vournas2015maximum}
\begin{align} \label{eq:real2bus}
(RQ-XP)^2 + RP+XQ \leq \frac{1}{4}.
\end{align}

Under the coalescence condition $\frac{R}{X} = \frac{P}{Q}$, both sufficient solvability condition \eqref{eq:est2bus} and the real condition \eqref{eq:real2bus} define the same solvability boundary characterized by $|RP + XQ| = \frac{1}{4}$.
\vspace*{-\baselineskip} 
\begin{figure}[ht]
    \centering
     \includegraphics[width=1.\columnwidth]{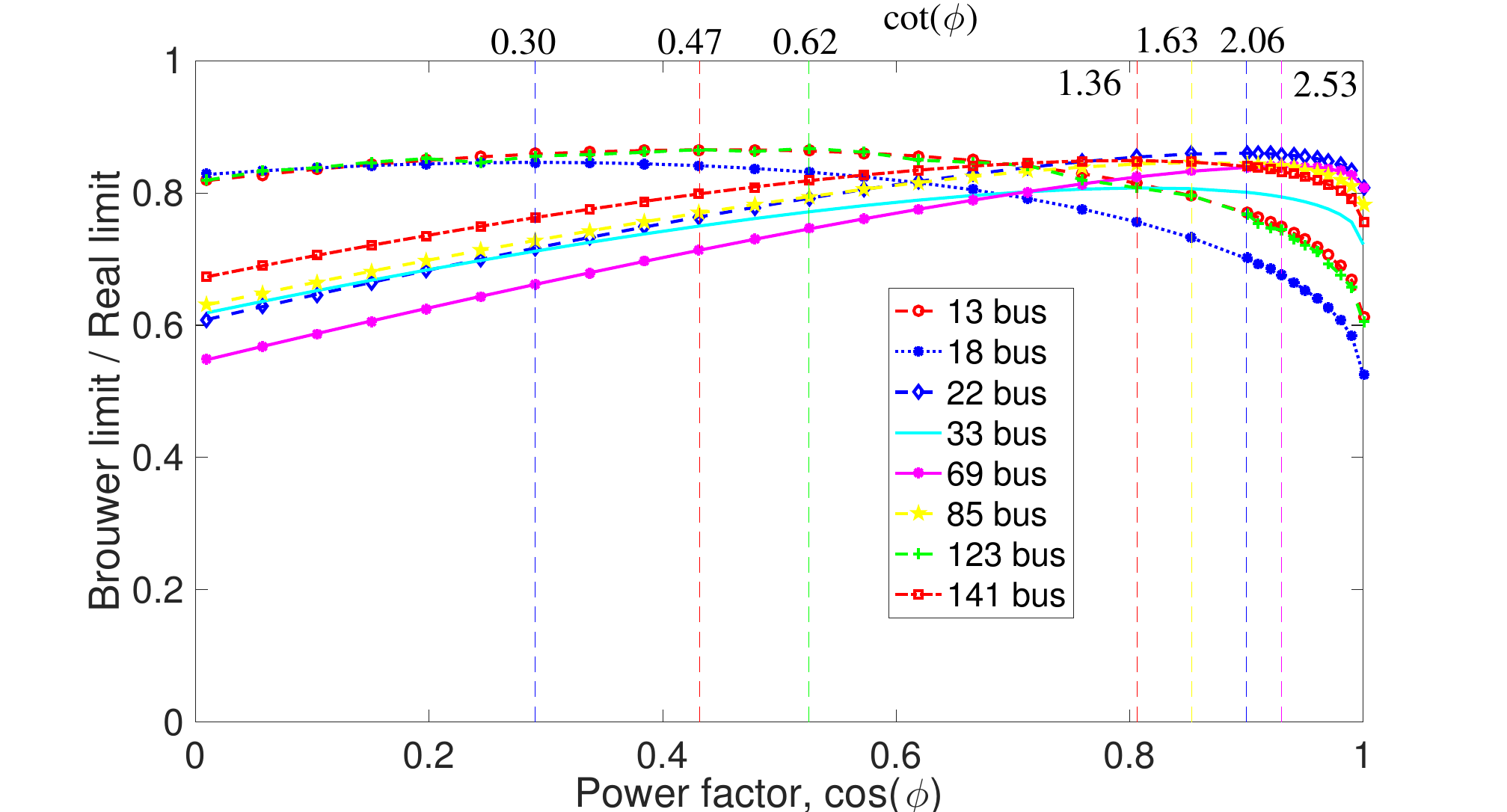}
	\caption{Solvability regions comparisons with different power factors}
     \label{fig:coalescence}
\end{figure}

\begin{table}[ht]
\centering 
\begin{tabular}{| c| c| c|c| }
\hline
 Test feeder & Matching $P/Q$ & Average  & Common \\
 & $cot(\phi)$ & $R/X$ &  $R/X$ range\\
 \hline
%  $13$-node & 0.47 & 0.50 & 0.50-0.60\\
%  $18$-node & 0.30 & 0.61 & 0.50-1.00\\
%  $22$-node & 2.06 & 1.94 & 1.94-1.97\\
 $33$-node & 1.36 & 1.44 & 1.07-1.36\\
 $69$-node & 2.53 & 2.07 & 2.80-3.10\\
%  $85$-node & 1.63 & 2.26 & 2.39-2.5\\
 $123$-node & 0.62 & 0.63 & 0.40-0.62\\
 $141$-node & 1.36 & 1.71 & 1.16-1.74\\
 \hline
\end{tabular}
\caption{Coalescence condition: matching $P/Q-R/X$ ratios}
\label{table:matching}
\end{table}

For large-scale distribution systems, unfortunately, we can neither observe any case where the estimated boundary matches the actual one nor provide any rigorously mathematical proof or estimation regarding the gap between the two boundaries. However, extensive simulation results reveal that the coalescence condition ``almost'' holds for large systems with a zero-loading base point. Figure \ref{fig:coalescence} plots the covering ratio--which we define as the ratio of the estimated loadability limit to the real loadability limit--against the homogeneous power factors $\cos(\phi)$ and $\cot(\phi)$. The intersection point of each pair, consisting of the horizontal curve and the vertical line of the same colour, indicates the maximum ratio for the corresponding distribution system. All maximum ratios are larger than $0.8$, indicating that the estimated solvability limit can cover more than $80\%$ of the actual limit. Moreover, the matching $P/Q$ ratio or $\cot(\phi)$ can be approximated by the average $R/X$ ratio of the lines. In all considered test cases, with $P/Q = \langle R/X\rangle$, the maximum covering percentage is circa $80\%$. It can also be seen that, if the lines are almost homogeneous in terms of the $R/X$ ratio, the matching ratio likely falls within the most common range of $R/X$ as shown in Table \ref{table:matching} for the $33$-node, $69$-node, $123$-node, and $141$-node test feeders.

\subsection{Large-scale distribution feeders} \label{sec:largesys}

\begin{figure}[ht]
    \centering  \includegraphics[width=1\columnwidth]{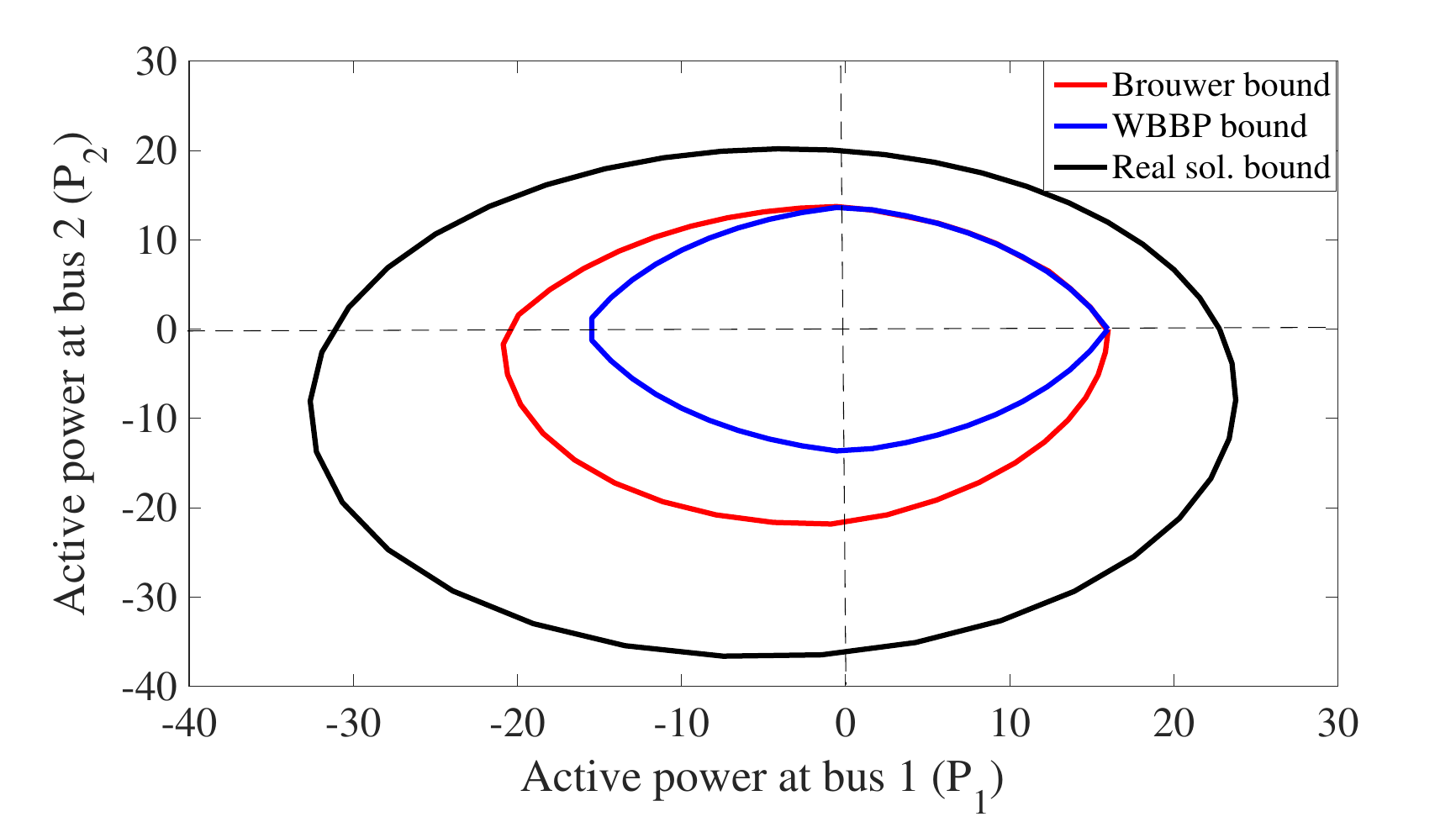}
	\caption{Solvability regions of IEEE $123$-node test feeder for a zero-loading base case}
     \label{fig:sol123full}
\end{figure}

% \begin{figure}[ht]
%     \centering
%     \subfigure[$\cos(\phi)=0.9$ ]{{\includegraphics[scale = 0.25]{141.eps}}} \label{fig:1}
%     \subfigure[$\cos(\phi)=0.867, \,cot(\phi) \approx \langle R/X \rangle $]{{\includegraphics[scale = 0.25]{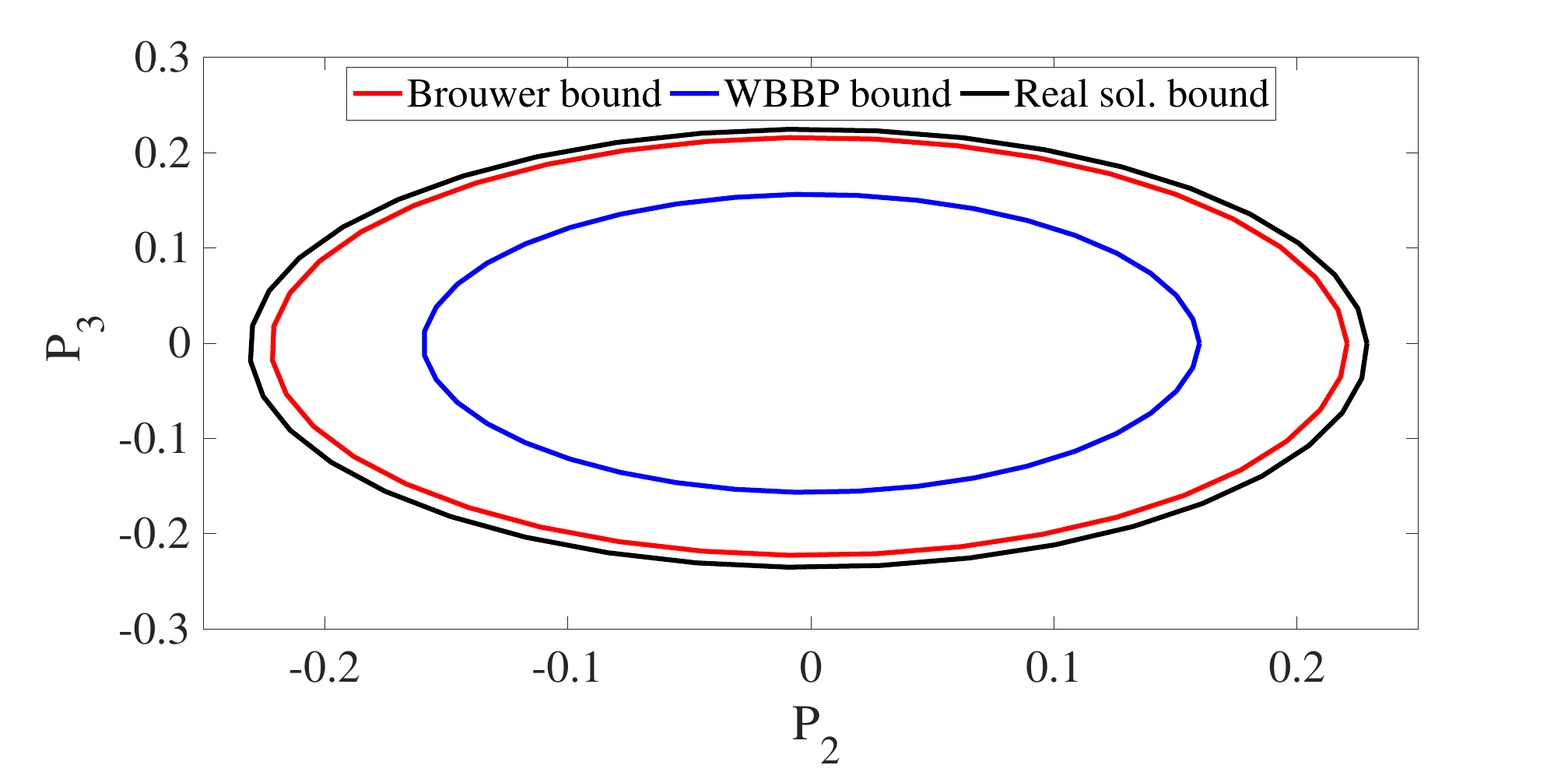}}}
% 	\caption{Solvability regions of IEEE $141$-node test feeder}
%      \label{fig:sol141bus}
% \end{figure}

\begin{figure}[ht]
    \centering  \includegraphics[width=1\columnwidth]{}
	\caption{Solvability regions of IEEE $141$-node test feeder, $P/Q \approx \langle R/X \rangle$}
     \label{fig:sol141bus}
\end{figure}

\begin{figure}[ht]
    \centering  \includegraphics[width=1.\columnwidth]{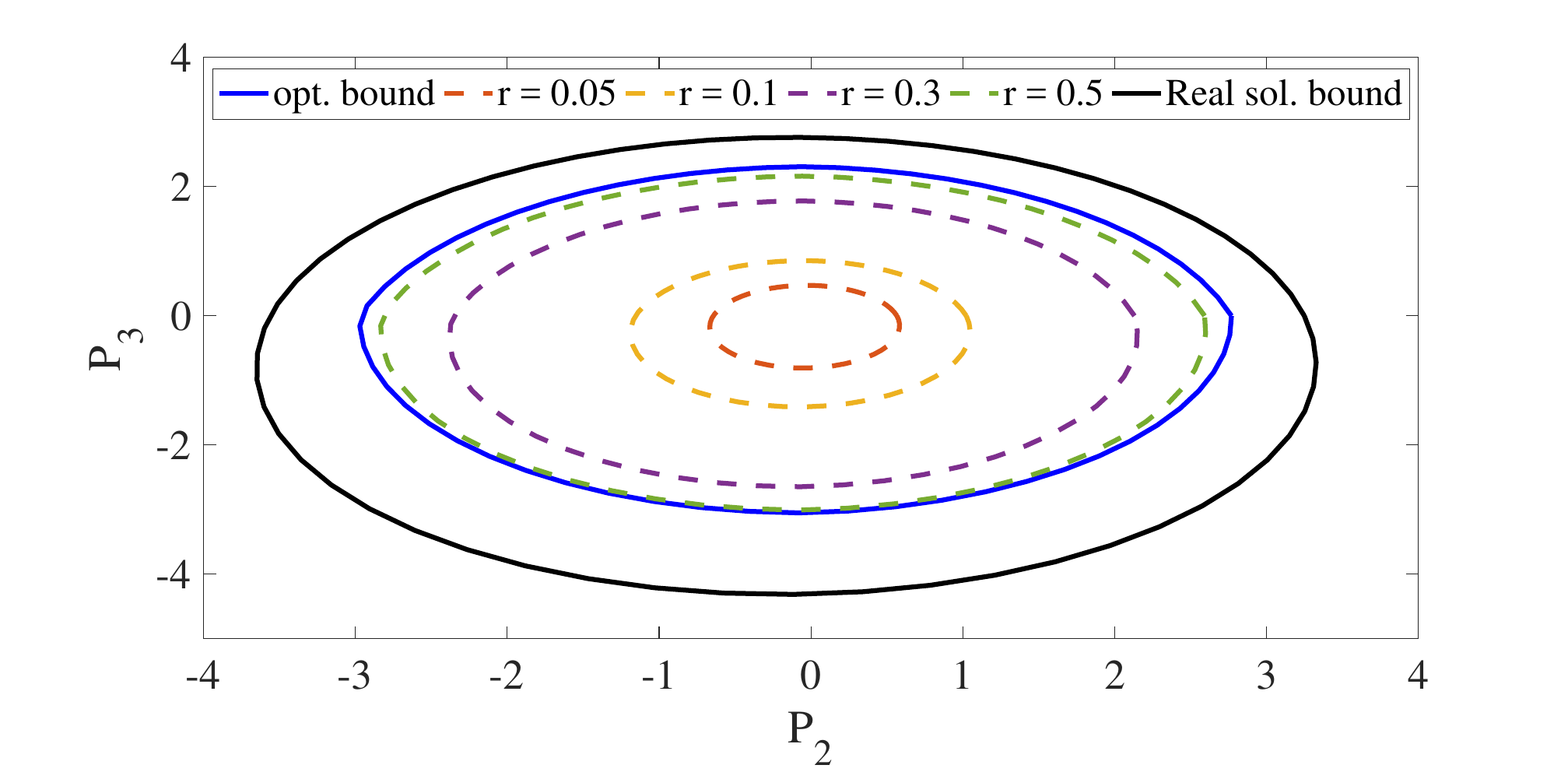}
	\caption{Solvability regions of IEEE $141$-node test feeder with different radii $r$ of voltage bounds}
     \label{fig:multir141}
\end{figure}

In this section, we continue constructing the solvability regions for larger scale test feeders. Some of those test cases are provided in MATPOWER package \cite{IEEEtestcaseMATPOWER}. Moreover, we use only per-unit system with the detailed information is included in the corresponding MATPOWER test cases. For example, for the $22$-bus case, the base quantities are $10$ MVA and $11$ kV. Note that most of our simulations consider incremental loading scenarios which maintain a constant power factor of $\cos(\phi)=0.9$, or $\Delta P_i/\Delta S_i = 0.9$ with bus $i$ is a load bus. The effect of the power factor is discussed in section \ref{sec:2bustheo}. Apart from that, the real loadability limits for all considered test feeders except the $2$-bus toy system are computed using the traditional continuation power flow technique. For a normal loading condition, Figures \ref{fig:sol123full}, \ref{fig:sol141bus} show that the approximated solvability region is large enough compared to the actual loadability region. Figure \ref{fig:sol141bus} illustrates the coalescence condition where the relation $P/Q \approx \langle R/X \rangle$ holds, implying that the estimated boundary almost matches the actual one. For this simulation, the base loading level is doubled the demand provided in the MATPOWER package. Moreover, we choose the incremental loading vector such that $\Delta P_i/\Delta S_i = 0.867$, $\Delta P_i = 0.1 \Delta P_1$ with $i \not\in \{2,3\}$ where bus $i$ is a load bus, and $\Delta P_2 = \Delta P_3$.

In addition, if the voltage bound of the solutions is of interest, one can construct the solvability region using \eqref{eq:Condition} with the corresponding radius $r$, where $0 \leq r < 1$. The volume of the union of the regions $\mathcal{U}_r$ increases with $r$, implying that, if one imposes a tighter bound on the voltage solution, then the power injections need also be confined inside a smaller set. Figure \ref{fig:multir141} clearly confirms the preceding statement.

The solvability criterion is constructed using norm-constrained bounds which, define a region in parameter space where the system has power flow solutions. As visualizing the corresponding solvability regions in multi-dimensional space is challenging, we only plot their cross-section in the plane of two different parameters. In each 2D plane, we consider a series of directions characterized by different incremental loading vector $\Delta \pmb{S}$, along which we trace the maximum power level following the estimation technique described in Appendix \ref{app:estimation}. Then the estimated solvability boundaries in blue are easily created by connecting all estimated maximum points. Meanwhile, the actual loadability and WBBP's boundaries are also constructed using the same set of loading directions.

We also compare our method to that of WBBP introduced in \cite{wang2016existence}. In most of the cases, the solvable regions constructed using Brouwer's approach encompass WBBP's. Figure \ref{fig:sol123full} shows that, for a zero-loading base point, our certificate is identical to that of WBBP's in the consumption regime, but it starts dominating as the loads inject powers. In practice, the injecting condition can be realized with distributed generation. There are some special cases such as in Figure \ref{fig:sol123bus}(a) wherein Brouwer boundary is much larger than that of WBBP's. In this particular simulation, the load consumes more reactive power than active power at the base operating point. From a mathematical perspective, such base point voltage is close to the limit where WBBP's solvability condition is no longer valid. As a result, the characterized region becomes extremely conservative. In contrast, our estimation is still functional. However, there exist some regimes where WBBP's method outperforms ours. An example of such cases is shown in Figure \ref{fig:sol123bus}(b) where the system is much more stressed than usual. In these simulations, the base case has a homogeneous power factor (i.e., all loads have the same power factor). This power factor is $0.2425$ and $0.2316$ for the cases illustrated in \ref{fig:sol123bus}(a) and \ref{fig:sol123bus}(b), respectively. Moreover, for the incremental loading vector, we fix the power factor $\Delta P_i/\Delta S_i = 0.9$, and all loads at bus $i = 3,\dots$ will increase by a factor of $0.1$, or in short, $\Delta P_i = 0.1 \Delta P_1$ for $i \not\in \{1,2\}$. We further imposed the condition $\Delta P_1 = \Delta P_2$.

\begin{figure}[ht]
    \centering
    \subfigure[Brouwer's results are superior]{{\includegraphics[scale = 0.26]{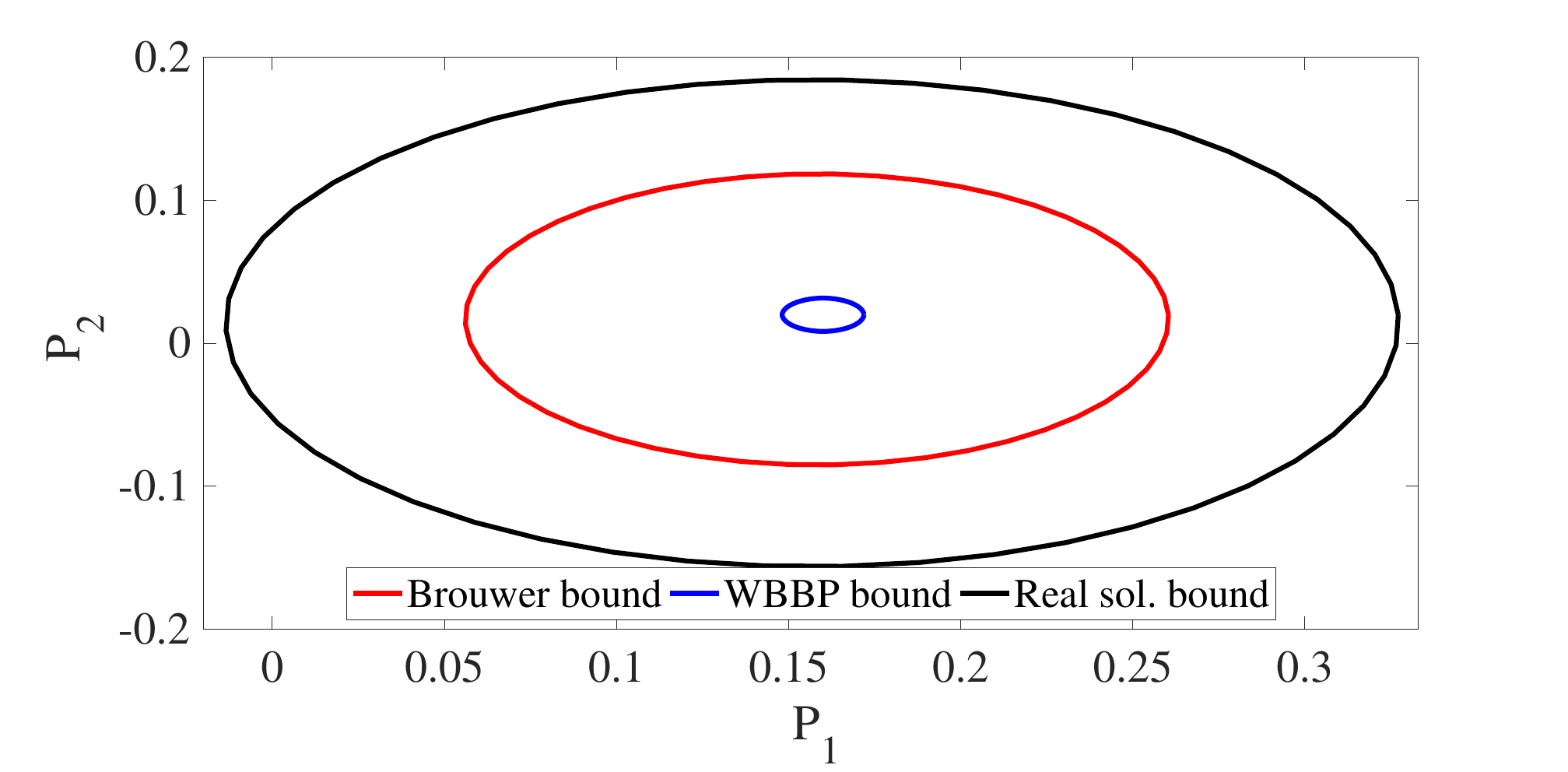}}}
    \subfigure[WBBP's results are superior]{{\includegraphics[scale = 0.26]{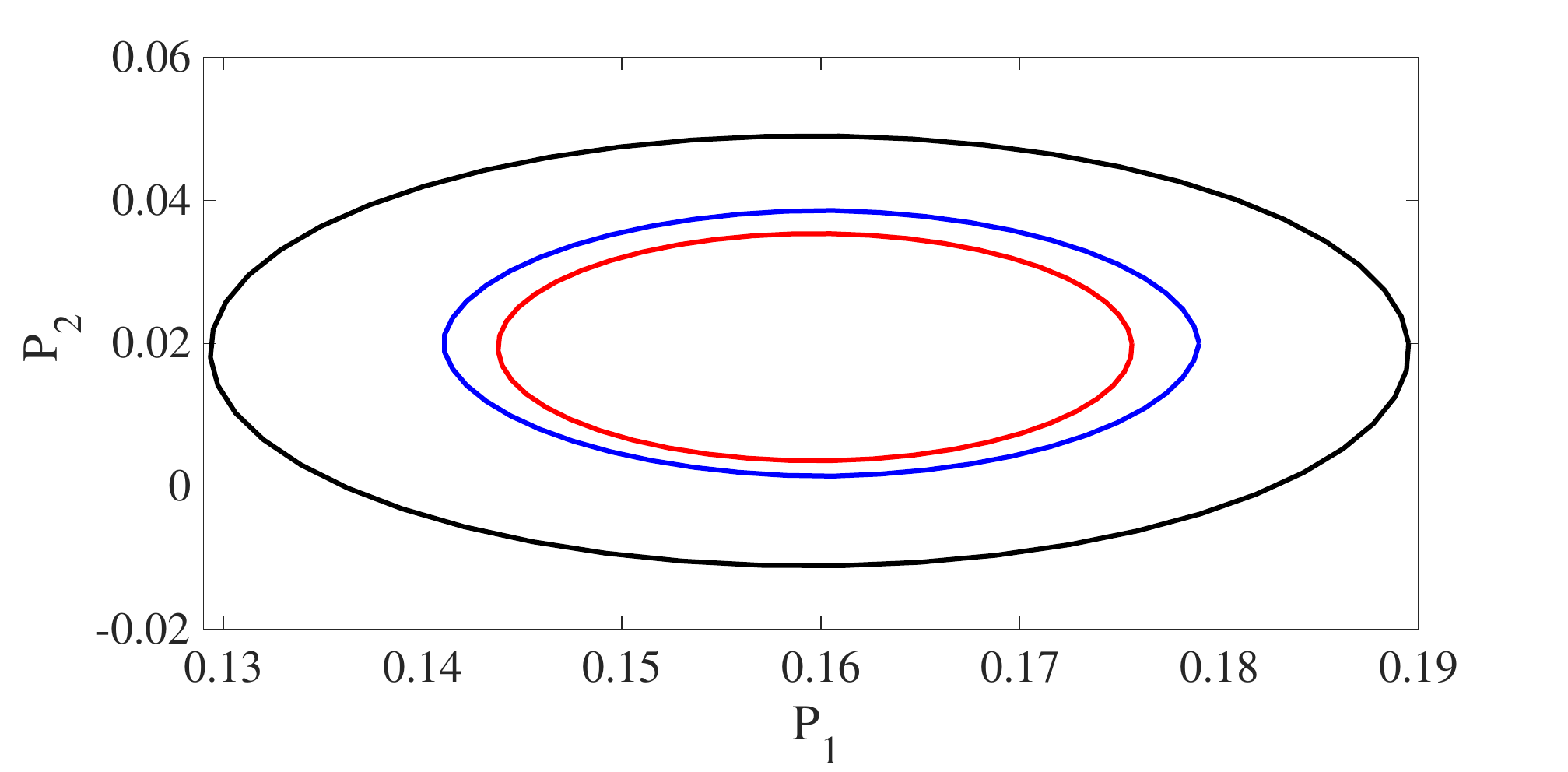}}}%
    \caption{Solvability regions of IEEE $123$-node test feeder \cite{IEEEtestcase123} for nonzero loading base cases}%
    \label{fig:sol123bus}%
\end{figure}

\begin{figure}[ht]
    \centering
    \subfigure[WBBP's/Brouwer's limit ratio]{{\includegraphics[scale=0.26]{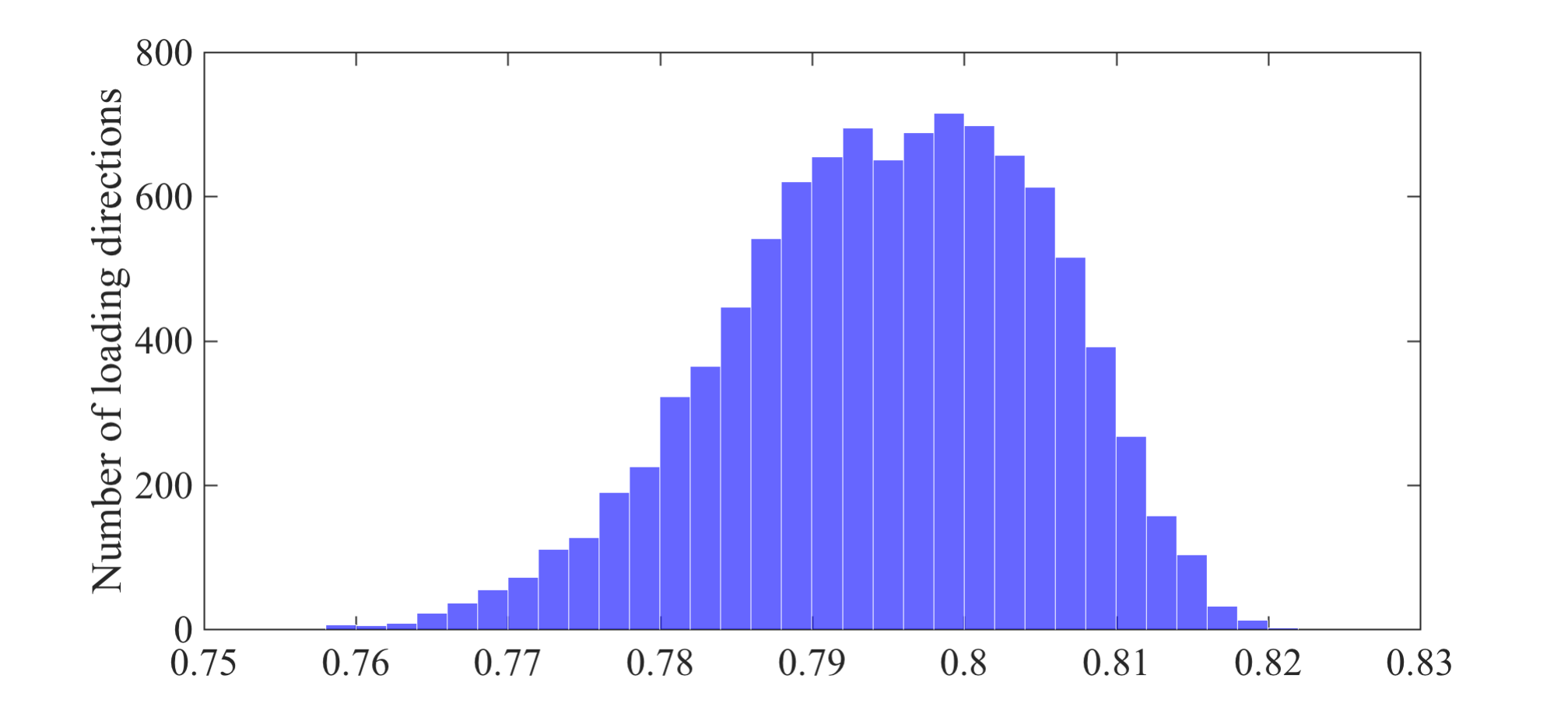}}}
   \subfigure[Brouwer's/Real limit ratio]{{\includegraphics[scale=0.26]{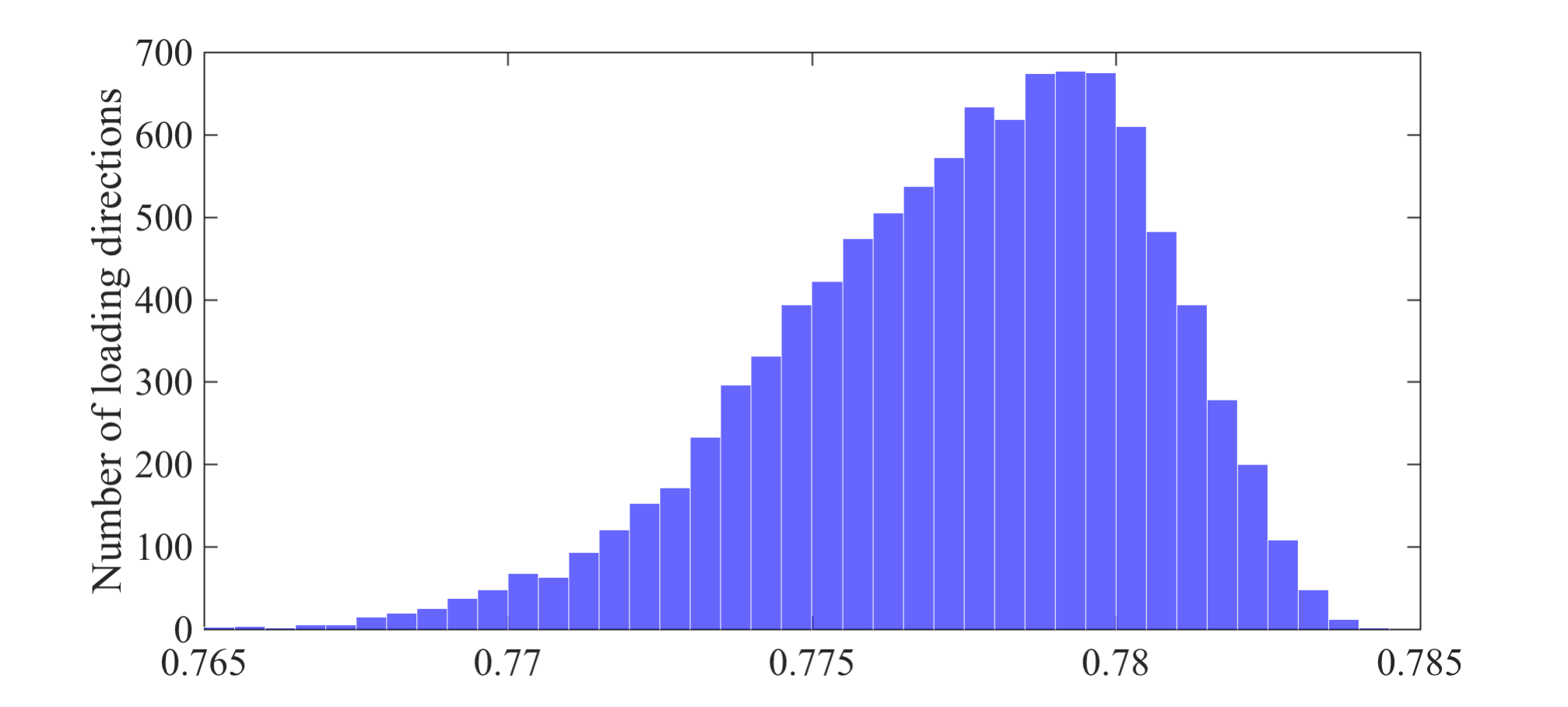}}}%
    \caption{Comparison of WBBP's, Brouwer's, and real loading limits for IEEE $141$-node test feeder}%
    \label{fig:hist}%
\end{figure}

The relative performance comparison between the two methods is extensively analyzed for the modified $141$-node test feeder with renewable penetration used in section \ref{sec:probmeas}. We assessed $10,000$ random loading scenarios in which the load buses with PVs can either inject or consume powers. The histograms plotted in Figure \ref{fig:hist} show that WBBP's results can cover circa $80\%$ of ours, which in turn cover almost the same percentage of the real limits.

\section{Conclusions and Future work}
In this paper, we developed an inner approximation technique for constructing convex subsets of the solvability region for distribution systems based on Brouwer's fixed-point theorem. The constructed regions can be used in security-related functions that rely on steady-state snapshots. In particular, the proposed fast screening tool based on the sufficient solvability conditions was shown to have a considerably faster screening process compared to that of the conventional screening process. Meanwhile, we introduced a new stability indicator, the certified admissible gain limit, which represents a safe gain wherein the system may move along any incremental loading directions without exhibiting voltage instability.

As mentioned earlier, this work focuses only on distribution systems with $PQ$ loads, and only simple bounds on the voltage solution can be imposed. For future research, we plan to extend the proposed technique to handle voltage-controlled buses in transmission networks and incorporate operational constraints such as voltage and current limits. In addition, it is possible to include $3$-phase AC distribution systems in the proposed framework.

\section{Acknowledgement}
Work of KT was funded by DOE/GMLC 2.0 project: ``Emergency monitoring and controls through new technologies and analytics''. KT, SY, and HN were supported by the NSF awards 1554171 and 1550015, Siebel Fellowship, and VEF. The work of HN was supported by NTU SUG. KD was supported by project 1.10 in the Control of Complex Systems Initiative, a Laboratory Directed Research and Development (LDRD) program at the Pacific Northwest National Laboratory. Also, we thank Paul Hines and Daniel Molzahn for their constructive feedback on our original work.

\bibliographystyle{IEEEtran}
% \bibliography{main}

% Generated by IEEEtran.bst, version: 1.14 (2015/08/26)

\section{Appendix}
\subsection{Lemma \ref{lem:PFrewrite}}
\begin{lemma}\label{lem:PFrewrite}
\eqref{eq:PFbasic} can be rewritten as
\begin{align}
& \pmb{y}+\zeta\br{\pmb{S}_\s} \conj{\pmb{y}} \nonumber\\
&\qquad=-\eta\br{\Delta \pmb{S}}-\diag{\eta\br{\Delta \pmb{S}}}\pmb{y}-\zeta\br{\Delta \pmb{S}} \conj{\pmb{y}}-\diag{\pmb{y}}\zeta\br{\pmb{S}} \conj{\pmb{y}}
\end{align}
where $\pmb{y}=[\![\pmb{V}]\!]^{-1}\pmb{V}_\s -\One, \pmb{\gamma}_\s= [\![\pmb{V}_\s]\!]^{-1}\pmb{V}^0 $.
\end{lemma}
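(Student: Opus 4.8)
The plan is to treat the claim as a purely algebraic rewriting of \eqref{eq:PFbasic}: I will rescale the power-flow equation so that the new unknown $\pmb{y}=\frac{\pmb{V}_\s}{\pmb{V}}-\One$ appears, eliminate the admittance matrix using $\pmb{Z}_\s$, and then clear a rational factor to expose the quadratic structure. Throughout I will use that $\br{\pmb{V}_\s,\pmb{s}_\s}$ solves \eqref{eq:PFbasic}, i.e.\ $\diag{\pmb{V}_\s}\conj{\pmb{Y}\br{\pmb{V}_\s-\pmb{\Vbase}}}=\pmb{s}_\s$, and that $\zeta$ is additive in its argument, so that $\zeta\br{\pmb{s}}=\zeta\br{\pmb{s}_\s}+\zeta\br{\Delta\pmb{s}}$.

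First I would left-multiply \eqref{eq:PFbasic} by $\diag{\pmb{V}_\s}\inv{\diag{\pmb{V}}}=\diag{\One+\pmb{y}}$. On the left this replaces $\diag{\pmb{V}}$ by $\diag{\pmb{V}_\s}$, while on the right it turns $\pmb{s}$ into $\diag{\One+\pmb{y}}\pmb{s}=\pmb{s}+\diag{\pmb{s}}\pmb{y}$. Subtracting the reference identity then removes the constant $\pmb{s}_\s$ and collapses the two admittance terms into a single difference:
\[\diag{\pmb{V}_\s}\conj{\pmb{Y}}\br{\conj{\pmb{V}}-\conj{\pmb{V}_\s}}=\Delta\pmb{s}+\diag{\pmb{s}}\pmb{y}.\]
Next I would left-multiply by $\pmb{Z}_\s=\inv{\diag{\conj{\pmb{V}_\s}}}\inv{\conj{\pmb{Y}}}\inv{\diag{\pmb{V}_\s}}$, which telescopes the left-hand side to $\inv{\diag{\conj{\pmb{V}_\s}}}\br{\conj{\pmb{V}}-\conj{\pmb{V}_\s}}$, whose $i$-th entry is exactly $\frac{\conj{V_i}}{\conj{V_{\s i}}}-1=-\frac{\conj{y_i}}{1+\conj{y_i}}$. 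This yields the (componentwise) relation
\[-\frac{\conj{\pmb{y}}}{\One+\conj{\pmb{y}}}=\pmb{Z}_\s\br{\Delta\pmb{s}}+\pmb{Z}_\s\diag{\pmb{s}}\pmb{y}.\]

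The remaining steps are to conjugate this identity, recognizing $\conj{\pmb{Z}_\s\Delta\pmb{s}}=\eta\br{\Delta\pmb{s}}$ and $\conj{\pmb{Z}_\s}\diag{\conj{\pmb{s}}}\conj{\pmb{y}}=\zeta\br{\pmb{s}}\conj{\pmb{y}}$, and then to multiply through by $\diag{\One+\pmb{y}}$ to clear the denominator. Expanding the product, using that componentwise products commute (so $\diag{\pmb{y}}\eta\br{\Delta\pmb{s}}=\diag{\eta\br{\Delta\pmb{s}}}\pmb{y}$), splitting $\zeta\br{\pmb{s}}=\zeta\br{\pmb{s}_\s}+\zeta\br{\Delta\pmb{s}}$, and moving $\pmb{y}$ and $\zeta\br{\pmb{s}_\s}\conj{\pmb{y}}$ to the left, reproduces the stated identity exactly (up to reordering of the right-hand terms).

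The only delicate point is the rescaling itself: the natural substitution $V_i=V_{\s i}/\br{1+y_i}$ makes $\conj{\pmb{V}}-\conj{\pmb{V}_\s}$ a \emph{rational} function of $\conj{\pmb{y}}$, so a direct substitution into \eqref{eq:PFbasic} would not be polynomial. The trick is that multiplying by $\diag{\One+\pmb{y}}$ at the start, and again at the end after conjugation, cancels the $1/\br{1+y_i}$ factor exactly via $\br{1+y_i}\cdot\frac{1}{1+y_i}=1$; this is what collapses the apparent rational function into the quadratic right-hand side with no leftover higher-order terms. I expect verifying this cancellation and keeping track of the conjugations to be the main bookkeeping obstacle; note that the auxiliary quantity $\pmb{\gamma}_\s=\frac{\pmb{V}^0}{\pmb{V}_\s}$ named in the statement plays no role in the derivation.
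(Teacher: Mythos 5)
Your proof is correct, and it reaches the stated identity by a route that is organized differently from the paper's. The paper inverts $\conj{\pmb{Y}}$ and $\diag{\pmb{V}}$ directly in \eqref{eq:PFbasic}, which forces the quantity $\pmb{\gamma}_\s=\frac{\pmb{V}^0}{\pmb{V}_\s}$ into the computation; the nominal-solution property of $\br{\pmb{V}_\s,\pmb{s}_\s}$ is then invoked only at the very end, through the identity $\eta\br{\pmb{s}_\s}+\pmb{\gamma}_\s=\One$, to absorb $\pmb{\gamma}_\s$ and produce the $\Delta\pmb{s}$ terms. You instead use the nominal-solution property at the very first step, subtracting $\diag{\pmb{V}_\s}\conj{\pmb{Y}\br{\pmb{V}_\s-\pmb{\Vbase}}}=\pmb{s}_\s$ after rescaling by $\diag{\One+\pmb{y}}$; this eliminates $\pmb{V}^0$ immediately, so $\pmb{\gamma}_\s$ never appears (as you correctly observe), and the $\Delta\pmb{s}$ structure emerges automatically on the right-hand side. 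The two arguments are algebraically equivalent --- your subtraction of the nominal equation is exactly what the paper's identity $\eta\br{\pmb{s}_\s}+\pmb{\gamma}_\s=\One$ encodes --- but yours avoids the separate verification of that identity and the attendant bookkeeping, at the cost of having to track the telescoping product $\pmb{Z}_\s\,\diag{\pmb{V}_\s}\conj{\pmb{Y}}=\inv{\diag{\conj{\pmb{V}_\s}}}$ and the rational expression $-\conj{\pmb{y}}/\br{\One+\conj{\pmb{y}}}$, which you handle correctly by clearing the denominator with $\diag{\One+\pmb{y}}$ after conjugation. All intermediate identities you use (linearity of $\zeta$ in $\pmb{s}$, $\conj{\pmb{Z}_\s\diag{\pmb{s}}\pmb{y}}=\zeta\br{\pmb{s}}\conj{\pmb{y}}$, and the commutativity $\diag{\pmb{y}}\eta\br{\Delta\pmb{s}}=\diag{\eta\br{\Delta\pmb{s}}}\pmb{y}$) check out.
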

\begin{proof}[Proof (Lemma \ref{lem:PFrewrite})]

\eqref{eq:PFbasic} can be rewritten as 
\begin{equation}
    \inv{\conj{\pmb{Y}}}\inv{\diag{\pmb{V}}}\pmb{S}=\conj{\pmb{V}-\pmb{V}^0}.
\end{equation}
Multiplying by $\inv{\diag{\conj{\pmb{V}}}}$ on the left, we get
\begin{equation}
    \inv{\diag{\conj{\pmb{V}}}}\inv{\conj{\pmb{Y}}}\inv{\diag{\pmb{V}}}\pmb{S}=\One-\inv{\diag{\conj{\pmb{V}}}}\conj{\pmb{V}^0}.
\end{equation}

{After leaving only $\One$ on the RHS, factor out $[\![ \overline{\pmb{V}} ]\!]^{-1}$ so that we are left with the equation below: }
{ 
\begin{align}
&[\![\overline{\pmb{V}}_\s]\!][\![\overline{\pmb{V}}]\!]^{-1}
\left( [\![\overline{\pmb{V}}_\s]\!]^{-1} \overline{\pmb{Y}}^{-1} [\![\pmb{V}_\s]\!]^{-1} [\![\pmb{S}]\!] [\![\pmb{V} ]\!]^{-1} \pmb{V}_\s 
+ [\![\overline{\pmb{V}}_\s]\!]^{-1} \overline{\pmb{V}}^0 \right)\nonumber\\
&= \One.
\end{align}
}

{Substituting in the values for $\pmb{Z}_\s$ and $\pmb{\gamma}_\s$, and defining $x = [\![\conj{{\pmb{V}}}]\!]^{-1} \conj{\pmb{V}}_\s$}, the above equation reduces to
\begin{equation}
    \diag{\pmb{x}}(\pmb{Z}_\s \diag{\pmb{S}}\conj{\pmb{x}} + \conj{\pmb{\Vinv_\s}}) = \One.
\end{equation}
Let $\pmb{y}=\conj{\pmb{x}}-\One$. Conjugating the above equation, we obtain
\begin{equation}
    \diag{\One+\pmb{y}}(\zeta\br{\pmb{S}} \br{\One+\conj{\pmb{y}}} + \pmb{\Vinv}_\s) = \One
\end{equation}
which {expands to}
\begin{equation}
 \diag{\pmb{y}}\zeta\br{\pmb{S}} \One+\zeta\br{\pmb{S}} \conj{\pmb{y}}+\diag{\pmb{y}}\zeta\br{\pmb{S}} \conj{\pmb{y}}  + \diag{\pmb{\Vinv}_\star}\pmb{y}+\eta\br{\Delta \pmb{S}} = 0
\end{equation}
where we use the relation that $\eta\br{\Delta \pmb{S}} = \zeta\br{\pmb{S}}\One+\pmb{\Vinv}_\star - \One$. This can be rewritten as
\begin{equation}
 \diag{\eta\br{\pmb{S}}+\pmb{\gamma}_\s}\pmb{y}+\zeta\br{\pmb{S}} \conj{\pmb{y}}+\diag{\pmb{y}}\zeta\br{\pmb{S}} \conj{\pmb{y}}  + \diag{\pmb{\Vinv}_\star}\pmb{y}+\eta\br{\Delta \pmb{S}} = 0. \label{eq:tmpEq}
\end{equation}
{Meanwhile, we know that, by the definition of $\eta(\pmb{S})$ and $\pmb{\gamma}$,}
\begin{align}
\eta\br{\pmb{S}}+\pmb{\gamma}_\s
&=\conj{\pmb{Z}_\s \pmb{S}_\s}+\inv{\diag{\pmb{V}_\s}}\pmb{V}^0 \\
& = \inv{\diag{\pmb{V}_\s}} \inv{\pmb{Y}} \br{\inv{\diag{\conj{\pmb{V}_\s}}} \conj{\pmb{S}_\s}+ \pmb{Y} \pmb{V}^0} \\
& = \inv{\diag{\pmb{V}_\s}} \inv{\pmb{Y}}\br{ \pmb{Y} \pmb{V}_\s} = \One.
\end{align}
Thus, \eqref{eq:tmpEq} can be rewritten as
\begin{align}
 &\pmb{y} + \zeta\br{\pmb{S}_\s} \conj{\pmb{y}} \nonumber\\
 &\qquad=  - \eta\br{\Delta \pmb{S}}- \diag{\eta\br{\Delta \pmb{S}}} \pmb{y} - \zeta\br{\Delta \pmb{S}} \conj{\pmb{y}}  - \diag{\pmb{y}}\zeta\br{\pmb{S}} \conj{\pmb{y}}.
\end{align}
Q.E.D.

\subsection{Proof of Theorem \ref{theo:certlim}} \label{proof:certlim}
By applying basic properties of operator norms, i.e., $\norm{A + B} \leq \norm{A} + \norm{B}$ and $\norm{A B} \leq \norm{A} \norm{B}$, we can derive an upper bound of the left hand side of condition \eqref{eq:ConditionInf} as below

\begin{align}
& 2\sqrt{ \left(\norm{\pmb{M}_\s\conj{\pmb{Z}}_\s}+\norm{\pmb{N}_\s \pmb{Z}_\s}\right) \norm{\inv{\pmb{J}_\s}}\norm{\pmb{Z}_\s} \norm{\Delta \pmb{S}} \norm{\pmb{S}}} \nonumber \\
& + 2 \left(\norm{\pmb{M}_\s\conj{\pmb{Z}}_\s}+\norm{\pmb{N}_\s \pmb{Z}_\s}\right) \norm{\Delta \pmb{S}} \nonumber
\end{align}
with the help of $\norm{\diag{\conj{\Delta \pmb{S}}}} =\norm{\diag{\Delta \pmb{S}}} = \norm{\Delta \pmb{S}}$. Furthermore, we have that $\norm{\pmb{S}} = \norm{\pmb{S}_\s + \Delta \pmb{S}} \leq \norm{\pmb{S}_\s} + \norm{\Delta \pmb{S}}$, and $\norm{\Delta \pmb{S}} = \norm{\lambda \Delta \pmb{u}} \leq \lambda$ as $\norm{\Delta \pmb{u}}= 1$. Then we arrive at a stronger form of \eqref{eq:ConditionInf}:
\begin{align}
& 2\sqrt{ \left(\norm{\pmb{M}_\s\conj{\pmb{Z}}_\s}+\norm{\pmb{N}_\s \pmb{Z}_\s}\right) \norm{\inv{\pmb{J}_\s}}\norm{\pmb{Z}_\s} \lambda \left(\norm{\pmb{S}_\s} + \lambda \right)} \nonumber \\
& + 2 \left(\norm{\pmb{M}_\s\conj{\pmb{Z}}_\s}+\norm{\pmb{N}_\s \pmb{Z}_\s}\right) \lambda \leq 1. \label{eq:strongcertin}
\end{align}
Any variation $\Delta{\pmb{S}} = \lambda \Delta{\mathbf{u}}$ that satisfies \eqref{eq:strongcertin} will also satisfy \eqref{eq:ConditionInf}. Letting the inequality \eqref{eq:strongcertin} hold as an equality, yields \eqref{eq:strongcert}. Moreover, to guarantee the real non-negativity of the square root term in \eqref{eq:strongcert}, it requires that 
\begin{equation}
2 \left(\norm{\pmb{M}_\s\conj{\pmb{Z}}_\s}+\norm{\pmb{N}_\s \pmb{Z}_\s}\right) \lambda \leq 1,
\end{equation}
or
\begin{equation}
\lambda \leq 0.5/ \left(\norm{\pmb{M}_\s\conj{\pmb{Z}}_\s}+\norm{\pmb{N}_\s \pmb{Z}_\s}\right).
\end{equation}

Then if $\lambda_M$ is the larger positive root of the equality \eqref{eq:ConditionInf}, we can calculate below the certified gain with which the system can be loaded along all normalized incremental direction without encountering voltage collapse:
\begin{equation}
\lambda_{CAG} = \min\{\lambda_M, 0.5/ \left(\norm{\pmb{M}_\s\conj{\pmb{Z}}_\s}+\norm{\pmb{N}_\s \pmb{Z}_\s}\right)\}.
\end{equation}

\subsection{Theoretical comparison to WBBP's results for the nominal solution} \label{app:nullinject}
Here we show that, for the nominal solution $\bf{V}^\s=\mathbf{1},\pmb{S}^\star=0$, our solvability certificate implies that by WBBP. In other words, for such nominal base point, our inner approximation encompasses WBBP's. To prove this, we introduce the related quantities used in \cite{wang2016existence}, namely $\bf{w} =\bf{V}^\s$, and $\zeta\br{\pmb{S}}=\inv{\diag{\bf{w}}} \inv{\bf{Y}}\inv{\diag{\bf{w}}}\diag{\pmb{S}}$. Note that for the zero power condition, we have $\bf{V}^\s =\conj{\bf{V}^\s} =\mathbf{1}$, and $\zeta\br{\pmb{S}}= \conj{\bf{Z}_\s} \diag{\pmb{S}}$. Moreover, the base Jacobian is an identity matrix, thus $\bf{M}_\s = \bf{1}$ and $\bf{N}_\s = 0$. The solvability criterion \eqref{eq:ConditionInf} duly becomes:
\begin{align}
2\norm{\zeta\br{\pmb{S}}\mathbf{1}}_\infty+2\sqrt{\norm{\zeta\br{\pmb{S}}\mathbf{1}}_\infty\norm{\zeta\br{\pmb{S}}}_\infty} \leq 1. \label{eq:EPFLCertFinal}
\end{align}

If we use the bound $\norm{\zeta\br{\pmb{S}}\mathbf{1}}_\infty\leq \norm{\zeta\br{\pmb{S}}}_\infty$, this condition is implied by the condition
\begin{align}\norm{\zeta\br{\pmb{S}}}_\infty\leq \frac{1}{4}\label{eq:EPFL}
\end{align}
 which is the region defined by Corollary 1 in \cite{wang2016existence}. Thus our analysis produces a stronger result than that from \cite{wang2016existence} for the zero power operating point.

\subsection{Maximum loading gain estimation} \label{app:estimation}
For a given base point $\pmb{S}_\s$ and a loading direction $\Delta \pmb{u}$, one needs to compute the maximum solvable loading level. A lower bound of such maximum level can be computed as $\pmb{S}_\s + \lambda_B \Delta \pmb{u}$, where $\lambda_B$ is the maximum gain satisfying the sufficient solvability condition \eqref{eq:ConditionInf}, i.e.

\begin{align} \label{eq:maxgainB}
& 2\sqrt{\norm{\pmb{M}_\s\conj{\pmb{Z}}_\s\diag{\conj{\Delta \pmb{u}}}+\pmb{N}_\s \pmb{Z}_\s \diag{\Delta \pmb{u}}}\norm{\inv{\pmb{J}_\s}}\norm{\pmb{Z}_\s \diag{\pmb{S}_\s + \lambda \Delta \pmb{u}}} \lambda} \nonumber \\
& +\norm{\pmb{M}_\s\conj{\pmb{Z}_\s} \diag{\conj{\Delta \pmb{u}}}+\pmb{N}_\s\diag{\pmb{Z}_\s\br{\Delta \pmb{u}}}} \lambda \nonumber \\
& +\norm{\pmb{M}_\s\diag{\conj{\pmb{Z}_\s\br{\Delta \pmb{u}}}}+\pmb{N}_\s \pmb{Z}_\s\diag{\Delta \pmb{u}}} \lambda \leq 1 
\end{align} 

In the condition above, all terms with star marks are given for a base point, and the loading direction $\Delta \pmb{u}$ is assumed to be known. Furthermore, with the help of the triangle inequality $\norm{\pmb{Z}_\s \diag{\pmb{S}_\s + \lambda \Delta \pmb{u}}} \leq \norm{\pmb{Z}_\s \diag{\pmb{S}_\s}} + \lambda \norm{\pmb{Z}_\s \diag{\Delta \pmb{u}}}$, one can obtain a stricter condition of \eqref{eq:maxgainB} which can be easily further transformed into a quadratic inequality in variable $\lambda$. Let such a quadratic inequality hold as equality, then one can find at most two corresponding solutions. $\lambda_B$ will be the largest solution which satisfies
\begin{align}
& \norm{\pmb{M}_\s\conj{\pmb{Z}_\s} \diag{\conj{\Delta \pmb{u}}}+\pmb{N}_\s\diag{\pmb{Z}_\s\br{\Delta \pmb{u}}}}  \lambda \nonumber\\ 
+ &\norm{\pmb{M}_\s\diag{\conj{\pmb{Z}_\s\br{\Delta \pmb{u}}}}+\pmb{N}_\s \pmb{Z}_\s\diag{\Delta \pmb{u}}}  \lambda \leq 1.
\end{align} 

The condition above is imposed simply to ensure the non-negativity of the square root term associated with the stricter condition of \eqref{eq:maxgainB}.
\end{proof}

\begin{IEEEbiography}[{\includegraphics[width=1in,height=1.25in,clip,keepaspectratio]{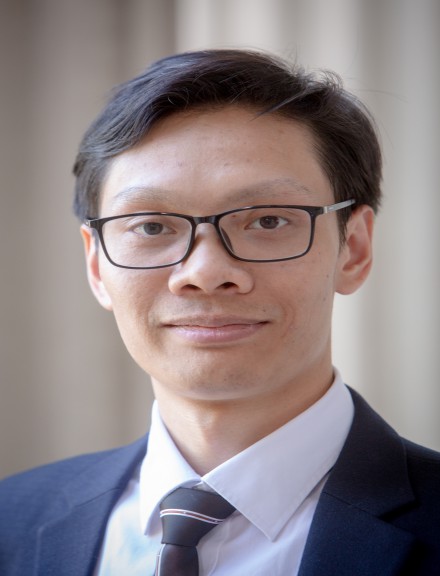}}]{Hung D. Nguyen} (S`12) received the B.E. degree in electrical engineering from HUT, Vietnam, in 2009, the M.S. degree in electrical engineering from Seoul National University, Korea, in 2013, and the Ph.D. degree in Electric Power Engineering at Massachusetts Institute of Technology (MIT). Currently, he is an Assistant Professor in Electrical and Electronic Engineering at NTU, Singapore. His current research interests include power system operation and control; the nonlinearity, dynamics and stability of large scale power systems; DSA/EMS and smart grids.
\end{IEEEbiography}
 \vspace{-10 mm}
%\newpage
\begin{IEEEbiography}[{\includegraphics[width=1in,height=1.25in,clip,keepaspectratio]{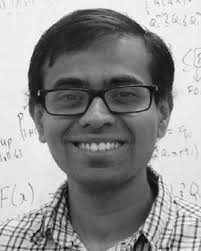}}]{Krishnamurthy Dvijotham} received the bachelor’s degree from the Indian Institute of Technology Bombay, Mumbai, India, in 2008, and the Ph.D. degree in computer science and engineering from the University of Washington, Seattle, Washington, USA, in 2014. He was a Researcher with the Optimization and Control Group, Pacific Northwest National Laboratory and a Research Assistant Professor with Washington State University, Pullman, WA, USA. He currently is a research scientist in Google Deepmind. His research interests are in developing efficient algorithms for automated decision making in complex physical, cyber-physical and social systems. 
\end{IEEEbiography}
\vspace{-10 mm}
\begin{IEEEbiography}[{\includegraphics[width=1in,height=1.25in,clip,keepaspectratio]{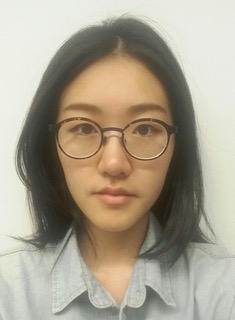}}]{Suhyoun Yu} received the BA in Mathematics, Wellesley College, and the Master in Mechanical Engineering, MIT. Currently, she is a Ph.D. candidate in Mechanical Engineering, MIT, with research in Reinforcement learning in the context of water and energy micro-nexus.
\end{IEEEbiography}

 \vspace{-10 mm}
\begin{IEEEbiography}[{\includegraphics[width=1in,height=1.25in,clip,keepaspectratio]{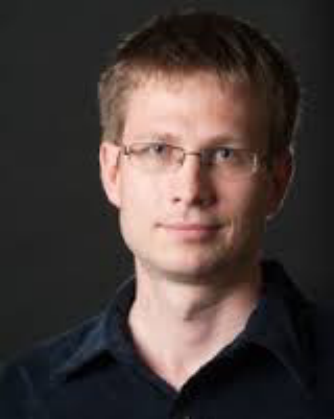}}]{Konstantin Turitsyn} (M`09) received the M.Sc. degree in physics from Moscow Institute of Physics and Technology and the Ph.D. degree in physics from Landau Institute for Theoretical Physics, Moscow, in 2007.  Currently, he is an Associate Professor in the Mechanical Engineering Department of Massachusetts Institute of Technology (MIT), Cambridge. Before joining MIT, he held the position of Oppenheimer fellow at Los Alamos National Laboratory, and Kadanoff–Rice Postdoctoral Scholar at University of Chicago. His research interests encompass a broad range of problems involving nonlinear and stochastic dynamics of complex systems. Specific interests in energy related fields include stability and security assessment, integration of distributed and renewable generation.
\end{IEEEbiography}

\end{document}